\documentclass[11pt,english]{amsart}
\PassOptionsToPackage{obeyFinal}{todonotes}
\usepackage[T1]{fontenc}
\usepackage[latin9]{inputenc}
\usepackage{color}
\usepackage{babel}
\usepackage{todonotes}
\usepackage{amsbsy}
\usepackage{amstext}
\usepackage{amsthm}
\usepackage{amssymb}
\usepackage{graphicx}
\usepackage{enumitem}
\usepackage[letterpaper]{geometry}
\usepackage[pdfusetitle,
 bookmarks=true,bookmarksnumbered=false,bookmarksopen=false,
 breaklinks=false,pdfborder={0 0 1},backref=false,colorlinks=false]
 {hyperref}
\hypersetup{
 colorlinks=true,linkcolor=teal,citecolor=purple,linktocpage=true}

\makeatletter
\numberwithin{equation}{section}
\numberwithin{figure}{section}
\theoremstyle{plain}
\newtheorem{thm}{\protect\theoremname}[section]
\theoremstyle{definition}
\newtheorem{defn}[thm]{\protect\definitionname}
\theoremstyle{plain}
\newtheorem{lem}[thm]{\protect\lemmaname}
\theoremstyle{remark}
\newtheorem{rem}[thm]{\protect\remarkname}
\theoremstyle{plain}

\theoremstyle{remark}

\theoremstyle{plain}

\theoremstyle{remark}

\@ifundefined{date}{}{\date{}}
\usepackage{amsmath, tikz-cd, amssymb, placeins}
\DeclareRobustCommand*\cal{\@fontswitch\relax\mathcal}

\usetikzlibrary{calc}
\usetikzlibrary{decorations.pathmorphing}

\tikzset{curve/.style={settings={#1},to path={(\tikztostart)
    .. controls ($(\tikztostart)!\pv{pos}!(\tikztotarget)!\pv{height}!270:(\tikztotarget)$)
    and ($(\tikztostart)!1-\pv{pos}!(\tikztotarget)!\pv{height}!270:(\tikztotarget)$)
    .. (\tikztotarget)\tikztonodes}},
    settings/.code={\tikzset{quiver/.cd,#1}
        \def\pv##1{\pgfkeysvalueof{/tikz/quiver/##1}}},
    quiver/.cd,pos/.initial=0.35,height/.initial=0}

\tikzset{tail reversed/.code={\pgfsetarrowsstart{tikzcd to}}}
\tikzset{2tail/.code={\pgfsetarrowsstart{Implies[reversed]}}}
\tikzset{2tail reversed/.code={\pgfsetarrowsstart{Implies}}}
\tikzset{no body/.style={/tikz/dash pattern=on 0 off 1mm}}

\makeatother

\providecommand{\claimname}{Claim}
\providecommand{\corollaryname}{Corollary}
\providecommand{\definitionname}{Definition}
\providecommand{\lemmaname}{Lemma}
\providecommand{\propositionname}{Proposition}
\providecommand{\remarkname}{Remark}
\providecommand{\theoremname}{Theorem}

\begin{document}
\title{Brunnian links of 3-balls in the 4-sphere}

\author{Seungwon Kim}
\address{Sungkyunkwan University\\Suwon, Gyeonggi, 16419 Republic of Korea}
\email{seungwon.kim@skku.edu}

\author{Gheehyun Nahm}
\address{Department of Mathematics, Princeton University, Princeton, New Jersey
08544, USA}
\email{gn4470@math.princeton.edu}

\author{Alison Tatsuoka}
\address{Department of Mathematics, Princeton University, Princeton, New Jersey 08540, USA}
\email{at8451@princeton.edu}

\thanks {SK was supported by National Research Foundation of Korea (NRF) grants funded by the Korean government (MSIT) (No.\ 2022R1C1C2004559). GN was partially supported by the ILJU Academy and Culture
Foundation, the Simons collaboration \emph{New structures in low-dimensional
topology}, and a Princeton Centennial Fellowship. AT was partially supported by an NSF Graduate Research Fellowship and by the NSF under Grant No. DMS-1928930
while in residence at the Simons Laufer Mathematical Sciences Institute in Berkeley, California,
during the Spring 2026 semester.}

\begin{abstract}
For each integer $n\ge 2$, we construct infinitely many $n$-component Brunnian links of 3-balls in $S^4$. Our main tool is the third author's result on the existence
of splitting spheres for the trivial two-component link of $2$-spheres
in $S^{4}$; we also give a new proof of this.
\end{abstract}

\maketitle
%
% \tableofcontents{}

\section{\label{sec:Introduction}Introduction}

In a breakthrough work \cite{budney2021knotted3ballss4}, Budney and Gabai constructed infinitely many knotted $3$\nobreakdash-balls in $S^4$. In this paper, for each integer $n\ge 2$, we construct infinitely many $n$-component \emph{Brunnian links of $3$-balls} in $S^{4}$. More precisely:

\begin{defn}[Brunnian links of $3$-balls]
\label{def:brunnian-links}Let $n\ge 2$ and fix $n$ pairwise disjoint $3$-balls $B_{1}, \cdots, B_{n}$ in $S^4$. Let $K_i =\partial B_i$ and $K=K_1 \sqcup \cdots \sqcup K_n $. We refer to $B:=B_{1} \sqcup \cdots \sqcup B_{n}$ as the $n$-component \emph{unlink} of spanning $3$-balls for $K$.

Let $B_1 ', \cdots ,B_n '$ be another collection of pairwise disjoint $3$-balls such that $\partial B_i ' = K_i $. We say that the $n$-component link of spanning $3$-balls $B'=B'_{1}\sqcup\cdots\sqcup B'_{n}$ is \emph{Brunnian} if $B'$ and $B$ are not isotopic rel.\ $K$, but for each $i=1,\cdots,n$, the $(n-1)$-component links of spanning 3-balls $B\setminus B_{i}$ and $B'\setminus B_{i}'$ are isotopic
rel.\ $K\setminus K_{i}$.
\end{defn}

Our main result is the following:
\begin{thm}\label{thm:main-thm}
For each $n\geq2$, there exist infinitely many pairwise non-isotopic
$n$-component Brunnian links of 3-balls in $S^{4}$. 
\end{thm}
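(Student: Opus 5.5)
The construction starts from the Budney–Gabai knotted $3$-balls \cite{budney2021knotted3ballss4}. Fix $B_1$ and let $\Delta$ range over their infinite family of spanning $3$-balls for $K_1=\partial B_1$ that are pairwise non-isotopic rel.\ $K_1$. These come from barbell diffeomorphisms of $S^1\times D^3$ (equivalently of $S^4\setminus\nu K_1$) that are supported near a ``barbell'' consisting of two $2$-spheres joined by an arc. The idea is to realize such a knotted ball while using the other components $K_2,\dots,K_n$ as part of the support of the diffeomorphism, arranged so that deleting any single component frees the diffeomorphism and makes it isotopic to the identity.

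\textbf{Step 1: the two-component case.} Let $\phi$ be a Budney–Gabai diffeomorphism of $S^4\setminus\nu K_1$, rel boundary, with $\phi(B_1)$ knotted. I would write $\phi$ as a commutator-type product $\phi=[\alpha,\beta]$ of diffeomorphisms, or more generally as an ``iterated'' barbell implantation whose two cuffs are made to link $K_2$. Concretely, I would implant the barbell so that one cuff is a meridian $2$-sphere of $K_2$. Set $B'_1=\phi(B_1)$ and $B'_2=B_2$, placed in the complement of the support.

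\textbf{Step 2: the Brunnian conditions.}
\begin{itemize}
\item After forgetting $K_1$, the component $B'_2=B_2$ is unchanged, so that condition is immediate.
\item After forgetting $K_2$, the meridian cuff bounds a $3$-ball, namely the complement of the tubular piece meeting $B_2$. The barbell map then extends over this ball and becomes isotopic to the identity, so $\phi(B_1)\simeq B_1$ rel $K_1$.
\item Non-isotopy of the pair $(B'_1\sqcup B'_2)$ rel $K$: if the pair were isotopic to the unlink, forgetting $B_2$ but keeping track of it only as a $2$-sphere would show that $B'_1$ is unknotted in the complement of $K_2$. This is exactly where the splitting-sphere result for the trivial two-component link of $2$-spheres (the third author's theorem) enters. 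An isotopy of the link of balls rel $K$ yields an isotopy in the complement of the split link $K_1\sqcup K_2$. The splitting sphere lets me normalize that isotopy so that the ambient picture is $S^4\setminus\nu K_1$ connect-summed away from $K_2$. In that complement the Budney–Gabai invariant (their Pontryagin-type / embedding-calculus invariant, or a relative version of it) still detects $\phi$, since the barbell cuff is nontrivial in $\pi_2$ of the complement of $K_1\sqcup K_2$.
\item Infinitely many links arise by taking infinitely many distinct $\phi$, distinguished by the same invariant.
\end{itemize}

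\textbf{Step 3: general $n$.} I would use an iterated construction in the style of Milnor's Brunnian links. Use an iterated barbell, a ``Bing-doubled'' cuff, whose cuff is itself a sphere that becomes null in the complement as soon as any one of $K_2,\dots,K_n$ is removed. For instance, take an $(n-1)$-fold iterated commutator of meridian spheres of $K_2,\dots,K_n$ in the sense of Whitehead products and Samelson products of $\pi_2$. Forgetting any $K_j$ with $j\ge2$ kills the commutator, so the map becomes isotopic to the identity. Forgetting $K_1$ leaves $B_2,\dots,B_n$ untouched.

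\textbf{Main obstacle.} The main obstacle is the non-triviality step. I need an invariant of spanning balls rel $K$ in the complement of the other components that survives the iterated construction. The splitting-sphere theorem is what lets me reduce an isotopy of the full link to one in a standard model of the complement. The remaining difficulty is computing the invariant of the iterated barbell there, via the Budney–Gabai computation in $\pi_1$ or $\pi_2$ of the configuration or embedding spaces.
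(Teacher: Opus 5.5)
There is a genuine gap. The proposal gives neither a well-defined construction nor a proof of non-isotopy, and the specific mechanisms you suggest cannot work.

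First, a cuff cannot "link" $K_2$. Since $K_2$ is a $2$-sphere in $S^4$, its meridian is a circle, not a $2$-sphere. Moreover $S^4\setminus\nu(K_1\sqcup K_2)\simeq S^1\vee S^1\vee S^3$ has $\pi_2=0$. So your claim that the cuff is nontrivial in $\pi_2$ of the complement is false. For the same reason, the Whitehead/Samelson-product "Bing-doubled cuffs" of Step 3 are all null. The interaction with the other components has to be carried by a circle. The paper implants $\boldsymbol{\delta_k}$ in a neighbourhood $S^1\times B^3$ of a circle winding around the $K_i$, and the Brunnian property is a $\pi_1$ phenomenon: cancelling red/blue circles in Theorem~\ref{thm:2-cpt Brunnian}, and a Bing-double circle $\eta$ in Theorem~\ref{thm:n-cpt-Brunnian}.

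Step 1 is also inconsistent as written. If $\phi$ is a Budney--Gabai map with $\phi(B_1)$ knotted rel $K_1$, then forgetting $K_2$ does not change $\phi(B_1)$, so the Brunnian condition fails. Once you modify $\phi$ so that it becomes trivial rel $K_1$, the Budney--Gabai detection no longer applies, and you need a new argument rel $K_1\sqcup K_2$.

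Second, the non-isotopy step is left open; you flag it yourself as the main obstacle. The splitting-sphere theorem does not "normalize" isotopies, so invoking it that way cannot close the gap. The missing idea in the paper has three parts:
\begin{enumerate}
\item An isotopy of ball links rel $K$ carries $\Sigma=\partial\overline{N}(B_1)$ back to itself up to isotopy rel $K$. So it suffices to show the diffeomorphism moves this splitting sphere.
\item The construction is designed so that the pieces that cancel when a component is forgotten (the blue circles) can be isotoped off $\Sigma$. Hence the diffeomorphism acts on $\Sigma$ exactly as $\boldsymbol{\beta_k}$.
\item $\boldsymbol{\beta_k}\Sigma\not\approx\Sigma$ and $\boldsymbol{\beta_k}\Sigma\not\approx\boldsymbol{\beta_\ell}\Sigma$, shown in the double branched cover $S^1\times S^3$. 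There the two lifts of the barbell each have $W_3$ equal to $W_3(\boldsymbol{\delta_k})$ rather than cancelling, giving $2W_3(\boldsymbol{\delta_k})$, and these are linearly independent in $k$.
\end{enumerate}
For general $n$, detection is by induction using cyclic branched covers along $K_n$. Each such cover is again $S^4$ containing an unlink, and it reduces the $n$-component question to the $(n-1)$-component one. The base case uses a $3$-fold cover over $K_2$ together with Theorem~\ref{thm:another-proof}. Your Step 3 contains no detection argument at all.
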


We prove Theorem~\ref{thm:main-thm} for all $n\ge 2$ in Theorem~\ref{thm:n-cpt-Brunnian}. For ease of exposition, we also give an alternative construction for the $n=2$ case in Theorem~\ref{thm:2-cpt Brunnian}, since the proof that these links are Brunnian is simpler than that of Theorem~\ref{thm:n-cpt-Brunnian}. The construction of Theorem~\ref{thm:2-cpt Brunnian} also generalizes to all $n\ge 2$; see Remark~\ref{rem:n2simple}.

We construct our Brunnian links of 3-balls by employing an infinite family of diffeomorphisms $\boldsymbol{\delta_k}$ of $S^1 \times B^3$ that Budney-Gabai used to construct knotted 3-balls in \cite{budney2021knotted3ballss4}. These diffeomorphisms are instances of \emph{barbell diffeomorphisms}; we review these constructions in Sections~\ref{sec:Barbells,-knotted-disks,}~and~\ref{sec:knotted-s3}.

The main tool that we use to distinguish the Brunnian links is the third author's result \cite{tatsuoka2025splittingspheresunlinkeds2s}
(Theorem~\ref{thm:another-proof-theorem}) on the existence of splitting
spheres for the trivial two-component link of $2$-spheres in $S^{4}$. We also present a new proof of Theorem~\ref{thm:another-proof-theorem},
which was motivated by a question asked by Peter Kronheimer at the
June 2025 ICTP conference; we thank him for the question.

\begin{defn}
\label{def:splitting-sphere}A splitting sphere $\Sigma$ for a 2-component link $K=K_1\sqcup K_2\subset S^n$ is an embedded $S^{n-1}$ in $S^n$ such that the $K_i$ are contained in different connected components of $S^n\setminus \Sigma$. Two splitting spheres for a link $K$ are isotopic if they are isotopic in $S^n$ rel.\ $K$.
\end{defn}

\begin{thm}[\cite{tatsuoka2025splittingspheresunlinkeds2s}; restated in Theorem~\ref{thm:another-proof}]
\label{thm:another-proof-theorem}There exist infinitely many pairwise
non-isotopic splitting 3-spheres for the trivial two-component link
of $2$-spheres in $S^{4}$.

\end{thm}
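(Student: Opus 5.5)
Take the trivial link $L=S_1\sqcup S_2$ of unknotted $2$-spheres, with $S_i$ bounding a $3$-ball $D_i$ and the $D_i$ disjoint. Let $\Sigma_0$ be the standard splitting sphere, the boundary of a small neighborhood $N$ of $D_1$. Its complement $S^4\setminus N$ is a $4$-ball $W$ containing $S_2$. New splitting spheres come from diffeomorphisms of the exterior of $S_1$ supported away from $S_2$, or from re-embedding $\Sigma_0$ in $S^4\setminus L$. The plan is to take the Budney--Gabai diffeomorphisms $\boldsymbol{\delta_k}$ of $S^1\times B^3$ and embed $S^1\times B^3$ as a neighborhood of a circle $\gamma$ that links $S_1$ once and misses $S_2$ and $\Sigma_0$'s complementary side suitably. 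Put $\Sigma_k=\boldsymbol{\delta_k}(\Sigma_0)$. Concretely, I would let $\gamma$ run around a meridian of $S_1$ (equivalently of $S_2$, since the meridian of $S_1$ crosses $\Sigma_0$). Then $\Sigma_0\cap(S^1\times B^3)$ is a $3$-ball fiber $\{pt\}\times B^3$, and $\Sigma_k$ is $\Sigma_0$ with this fiber replaced by the Budney--Gabai knotted $3$-ball $\boldsymbol{\delta_k}(\{pt\}\times B^3)$. Since $\boldsymbol{\delta_k}$ is the identity near $\partial(S^1\times B^3)$, each $\Sigma_k$ is an embedded $3$-sphere disjoint from $L$ that separates $S_1$ from $S_2$.

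To distinguish the $\Sigma_k$ rel.\ $L$, I would cut along the sphere. An isotopy rel.\ $L$ from $\Sigma_k$ to $\Sigma_l$ extends to an ambient isotopy fixing $L$. Restricting to a side gives an isotopy of the $4$-ball pieces. The key step is to turn this into an isotopy of $3$-balls rel boundary in $S^1\times B^3$ (or in $S^4$ relative to a $2$-knot), where Budney--Gabai's invariant applies. Ambient surgery on $S_2$ should do this. Tube $\Sigma_k$ to itself, or equivalently remove a neighborhood of $D_2$ and glue in $S^1\times B^3$ (the Gluck-type surgery turning the exterior of an unknotted $S^2$ into $S^1\times B^3$, i.e.\ $S^4\setminus\nu(S_2)\cong B^3\times S^1$ up to the $D^2$ factor). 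Under this identification, $\Sigma_k$ restricted to $S^4\setminus\nu(D_1\cup S_2)$ becomes a properly embedded $3$-ball in $S^1\times B^3$ (after removing the side containing $S_1$, which is a standard ball neighborhood). That $3$-ball should be exactly $\boldsymbol{\delta_k}(pt\times B^3)$. Isotopy of splitting spheres rel $L$ then implies isotopy rel boundary of these $3$-balls, perhaps up to the action of diffeomorphisms of the exterior that fix $L$, such as rotations around $S_2$ and the mapping classes of the standard unlink complement.

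The main obstacle is exactly that ambiguity. The isotopy need not fix $\nu(S_2)$ pointwise, and $\Sigma_k$ is only determined up to the mapping class group of $(S^4,L)$ acting. I would handle it as follows. First isotope so that the isotopy fixes a neighborhood of $L$; this is possible by isotopy extension and the fact that the relevant normal framing data is in $\pi_1 SO(2)$, absorbed by rotating the $S^1$ factor. Second, apply Budney--Gabai's invariant (the $W_3$-type / Dax-type invariant distinguishing $\boldsymbol{\delta_k}(pt\times B^3)$), which is additive and insensitive to the remaining ambiguity, since Dehn twists along $S^1$ and the standard symmetries change it trivially. This gives infinitely many values and hence infinitely many non-isotopic $\Sigma_k$.
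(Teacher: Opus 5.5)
There is a genuine gap in both the construction and the detection step.

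\textbf{The construction is degenerate.} A meridian $\gamma$ of $S_1$ can be taken in an arbitrarily small neighborhood of $S_1$. It then lies in the interior of $N=\nu(D_1)$, disjoint from $\Sigma_0=\partial N$. The implanted diffeomorphism is supported in $\nu(\gamma)$, so it fixes $\Sigma_0$ pointwise, and every $\Sigma_k$ equals $\Sigma_0$.

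Your claim that $\Sigma_0\cap\nu(\gamma)$ is a single fiber $\{pt\}\times B^3$ cannot hold for any closed curve. Since $\Sigma_0$ separates $S^4$, every loop meets it with algebraic intersection $0$. So the intersection with $\nu(\gamma)$ is empty or consists of pairs of oppositely co-oriented fibers.

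Also, the meridians of $S_1$ and $S_2$ are not ``equivalent'': they are distinct free generators of $\pi_1(S^4\setminus L)\cong F_2$, and distinct even in $H_1$. What you need is a circle that passes through $\Sigma_0$ and winds around both components, like the paper's red circle. In the double branched cover, that circle lifts to two circles, each isotopic (up to orientation) to $S^1\times\{pt\}$.

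\textbf{The detection step is missing its key idea.} In $S^4\setminus\nu(S_2)\cong S^1\times B^3$, or after surgering $S_2$ to get $S^1\times S^3$, the sphere $\Sigma_k$ still bounds a $4$-ball around $S_1$ and is null-homologous. So no cut-and-paste of this kind turns it into the essential ball $\boldsymbol{\delta_k}(\{pt\}\times B^3)$. A $3$-ball spanning $S_2$ would give a non-separating sphere, but such a ball is not determined by $\Sigma_k$.

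The paper instead takes the double branched cover of $S^4$ along $L$, which is $S^1\times S^3$. There $\Sigma$ itself lifts to $\pm\{pt\}\times S^3$, and isotopies rel $L$ lift. The $W_3$ invariant factors through $\pi_0\mathrm{Emb}_0(S^3,S^1\times S^3)$, so it applies directly to the lifted spheres.

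The lift of the barbell diffeomorphism is a composite $\boldsymbol{\widetilde{\beta_k^2}}\circ\boldsymbol{\widetilde{\beta_k^1}}$ of two implantations. The real work is showing these two contributions do not cancel:
\begin{itemize}
\item $\widetilde{\beta_k^1}\approx\delta_k$.
\item $\widetilde{\beta_k^2}$ is $\delta_k'$ with both cuffs and the bar reversed, so by Lemma~\ref{lem:reverseori} it implants $\boldsymbol{\delta_k'}^{-1}$.
\item Since $W_3(\boldsymbol{\delta_k'})=-W_3(\boldsymbol{\delta_k})$ (Theorem~\ref{thm:w3comp}), the total is $2W_3(\boldsymbol{\delta_k})$.
\end{itemize}
Linear independence of the $W_3(\boldsymbol{\delta_k})$ over $k$ then finishes the proof.

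Your final paragraph appeals to an invariant that is ``additive and insensitive to the remaining ambiguity.'' That supplies neither a well-defined object to evaluate it on nor any reason its value is nonzero.
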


\begin{rem}[Unknotting and unlinking $3$-balls in the $5$-ball]Powell \cite{powellspanning} showed that any two $n$-component links of spanning $3$-balls for an $n$-component unlink in $S^4$ become isotopic rel.\ $\partial$ after pushing their interiors into $B^5$. His work generalizes a result of Hartman \cite{hartman_unknotting3balls}, who showed the same statement for spanning 3-balls for the unknot in $S^4$.
\end{rem}

\begin{rem}The $n=2$ case of Theorem~\ref{thm:main-thm} was independently proved recently by Niu \cite{niu2026brunnianspanning3disks2unlink}. Niu also uses barbell diffeomorphisms to construct his Brunnian links, but distinguishes them using different methods, namely a generalization of Budney and Gabai's $W_3$ invariant  \cite{budney2021knotted3ballss4}.
\end{rem}

\subsection{Organization}
In Section \ref{sec:Barbells,-knotted-disks,}, we review the construction
of barbell diffeomorphisms introduced by Budney and Gabai in \cite{budney2021knotted3ballss4}. In Section~\ref{sec:knotted-s3}, we present another proof of Theorem~\ref{thm:another-proof-theorem}
\cite{tatsuoka2025splittingspheresunlinkeds2s} (Theorem \ref{thm:another-proof}) on the existence of splitting spheres for unknotted 2-spheres in $S^4$, and review the relevant
tools from \cite{budney2021knotted3ballss4} needed for the proof. In Section~\ref{sec:brunnian}, we construct $n$-component
Brunnian links of 3-balls in $S^{4}$ for all $n\geq2$ (Theorem \ref{thm:n-cpt-Brunnian}).

\subsection{Conventions}
Manifolds are oriented, and maps between manifolds are orientation preserving. If $X$ is an oriented manifold, $\overline{X}$ is $X$ equipped with the opposite orientation.

$A\approx B$ means that $A$ and $B$ are isotopic. Diffeomorphisms and isotopies are rel.\ $\partial$ if they fix the boundary pointwise. Similarly, if $F$ is a set, then diffeomorphisms and isotopies are rel.\ $F$ if they fix $F$ pointwise. 

Unless otherwise specified, when we draw objects in $S^{4}$, we view
$S^{4}=B^{4}\cup S^{3}\times[-1,1]\cup \overline{B^{4}}$, and we draw
their intersection with the $S^{3}\times0$ ``time slice'' of $S^{4}$,
where we view this time slice $S^{3}\times0$ as $\mathbb{R}^{3}$ with
a point at infinity. Some objects do not lie fully in this
time slice: notably, we often consider $2$- and $3$-spheres that
intersect $S^3 \times 0$ in equatorial $1$- and $2$-spheres, respectively.
In these cases, we imagine the rest of these spheres being capped
off by $2$- and $3$-disks, respectively, in the nearby time slices
$S^{3}\times\pm\varepsilon$. (This description of $2$-spheres can
also be thought of as a banded unlink diagram with no
bands.)

\subsection*{Acknowledgements}
GN thanks Peter Ozsv\'{a}th for his continuous support and helpful
discussions. AT thanks Dave Gabai for his invaluable mentorship and advice.

\section{\label{sec:Barbells,-knotted-disks,}Barbell diffeomorphisms}

In this section, we review the construction of barbell diffeomorphisms; for more details, see Chapter 5 of \cite{budney2021knotted3ballss4}. Budney and Gabai construct interesting diffeomorphisms (\emph{``barbell
diffeomorphisms''}) of a $4$-manifold $X$ by first defining an
auxiliary $4$-manifold (the \emph{``model thickened barbell''})
$\mathcal{NB}:=S^{2}\times D^{2}\natural S^{2}\times D^{2}$ and an
interesting diffeomorphism (the \emph{``barbell map''}) $\beta:\mathcal{NB}\to\mathcal{NB}$
rel.\ $\partial$. Then, they specify an embedding of ${\cal NB}$
into $X$, and obtain a diffeomorphism of $X$ by pushing forward
$\beta$; this operation is called \emph{barbell implantation}.

\begin{figure}[h]
\begin{centering}
\includegraphics[scale=0.3]{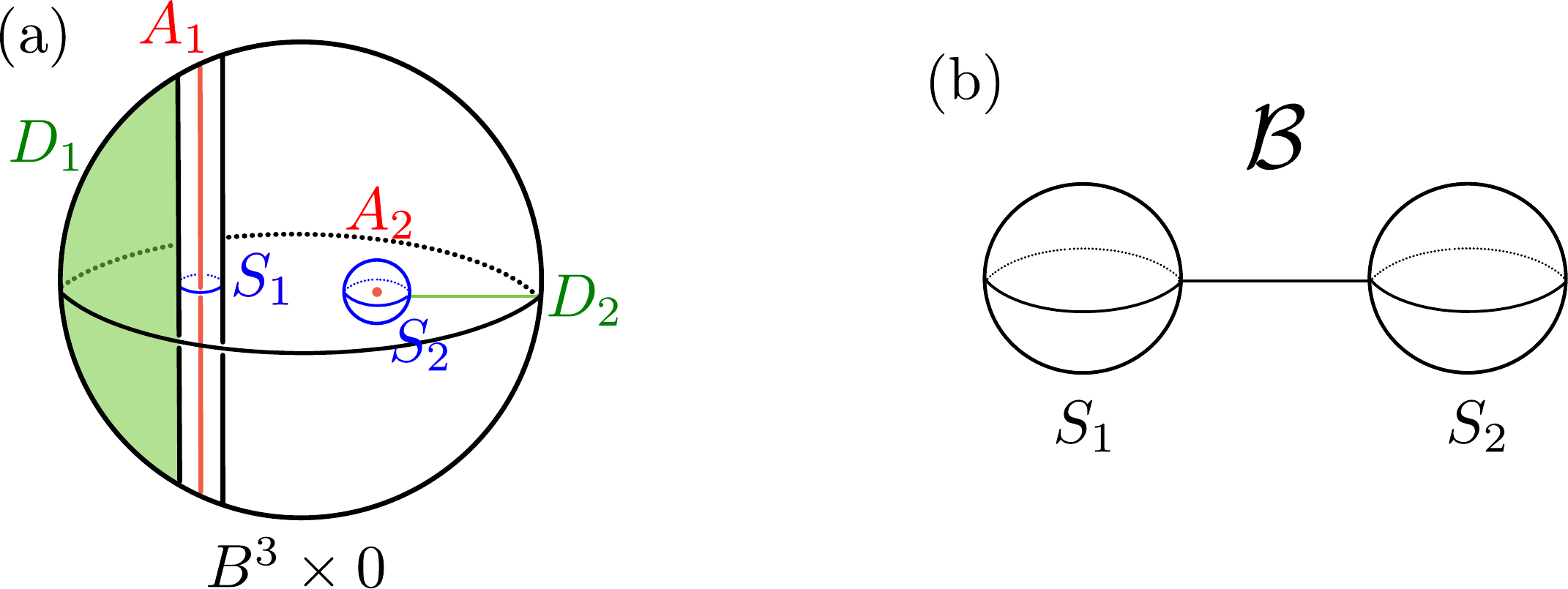}
\par\end{centering}
\caption{\label{fig:barbell}(a): The arcs $A_{1}$ and $A_{2}$, the 2-spheres
$S_{1}$ and $S_{2}$, and the 2-disks $D_{1}$ and $D_{2}$ in the
$B^{3}\times0$ slice of $B^{4}=B^{3}\times[-1,1]$. Note that $D_{1}$
appears fully in this 3-dimensional slice, while $D_{2}$ intersects
it in an arc; similarly, $S_{2}$ appears fully in this slice, while
$S_{1}$ intersects it in an equatorial $S^{1}$. (b): The barbell
$\mathcal{B}$. }
\end{figure}

\subsection{\label{subsec:barbell-map}The barbell map $\beta:\mathcal{NB}\to\mathcal{NB}$}

Consider two disjoint, properly embedded arcs $A_{1}$ and $A_{2}$
in $B^{4}$. Then $B^{4}\setminus(N(A_{1})\sqcup N(A_{2}))\cong S^{2}\times D^{2}\natural S^{2}\times D^{2}$.
The barbell map $\beta$ will be defined by applying the isotopy extension
theorem to a particular isotopy of $A_{1}$ in $B^{4}$ rel.\ $N(A_{2})$.
This isotopy is called \emph{arc-spinning}, and is given by \emph{spinning
$A_{1}$ positively around $A_{2}$} (or, equivalently, spinning $A_{1}$
positively around the meridian $2$-sphere $S_{2}$ of $A_{2}$),
which we now explain.

Viewing $B^{4}$ as $B^{3}\times[-1,1]$, suppose that $A_{1}$ appears
fully in the $B^{3}\times0$ slice of $B^{4}$ as a vertical arc,
and let $A_{2}$ be the arc $\mathrm{pt}\times[-1,1]$ that intersects
$B^{3}\times0$ in a point disjoint from $A_{1}$ (Figure~\ref{fig:barbell}~(a)).
Isotope $A_{1}$ in $B^{3}\times0$ by dragging it \emph{positively}
around the point of intersection $A_{2}\cap(B^{3}\times0)$ until
it returns back to where it started, keeping $\partial A_{1}$ fixed
throughout. We can arrange so that the neighborhood $\overline{N}(A_{1})$
begins and ends in the same position after performing this isotopy,
so that applying the ambient isotopy extension theorem, our isotopy
of $A_{1}$ rel.\ $\overline{N}(A_{2})$ is realized by a 1-parameter family
of diffeomorphisms of $B^{4}$ rel.\ $\overline{N}(A_{2})$ for which the
time-$1$ map is a diffeomorphism of $B^{4}$ that fixes $\overline{N}(A_{1})\sqcup \overline{N}(A_{2})\sqcup \overline{N}(\partial B^{4})$
pointwise. Cutting out $N(A_{1})\sqcup N(A_{2})$ from $B^{4}$
and taking the restriction of this time-$1$ map gives us our diffeomorphism
$\beta:\mathcal{NB}\rightarrow\mathcal{NB}$; note that $\beta$ fixes
the boundary $\partial(\mathcal{NB})$, as desired. Also note that $\beta$ is isotopic
rel.\ $\partial$ to the diffeomorphism of $\mathcal{NB}$ obtained by spinning $A_{2}$ \emph{negatively} around $A_{1}$ in $B^4$ (see \cite[Remark 5.4 ii)]{budney2021knotted3ballss4}).

\subsection{Barbell implantation}

Given an embedding $f:\mathcal{NB}\hookrightarrow X$, we can push
forward the barbell map $\beta\in\text{Diff}_{\partial}(\mathcal{NB})$
via the embedding $f$, and obtain a diffeomorphism, called $\beta_{f}\in\text{Diff}_{\partial}(X)$,
of $X$ rel.\ $\partial$. This operation is called \emph{barbell
implantation}, and the diffeomorphisms obtained in this way are called
\emph{barbell diffeomorphism}s. In this subsection, we recall the
information necessary to specify the barbell diffeomorphism $\beta_{f}$
up to isotopy rel.\ $\partial$.

Observe that $\mathcal{NB}$ deformation retracts onto the 2-complex
given by two 2-spheres connected by an arc (Figure~\ref{fig:barbell}~(b));
this 2-complex is called the \textit{barbell} and is denoted by $\mathcal{B}$.
By \cite[Remarks 5.12]{budney2021knotted3ballss4}, the barbell diffeomorphism
$\beta_{f}$ is determined, up to isotopy rel.\ $\partial$, by the
restriction $f|_{{\cal B}}$ (recall that our convention requires maps between manifolds to be orientation-preserving); in particular, $\beta_{f}$ does not depend
on the framing of the bar.
This motivates the following definition.
\begin{defn}[Barbells]
\label{def:barbell}A \emph{barbell} in a $4$-manifold
$X$ is a collection of the following:
\begin{enumerate}
\item two oriented, embedded $S^{2}$'s with trivial normal bundle (\emph{``cuffs''}),
and
\item an oriented, embedded interval $[0,1]$ that connects the two $S^{2}$'s
(\emph{``bar''}).
\end{enumerate}
We require that the cuffs are disjoint, and that the bar intersects
the cuffs only at its endpoints.
\end{defn}

By the above discussion, a barbell $\eta$
in $X$ uniquely determines (up to isotopy rel.\ $\partial$) the
corresponding barbell diffeomorphism in ${\rm Diff}_{\partial}(X)$.
Note that the orientation on the bar is only used to tell the two
cuffs apart. We denote the barbell diffeomorphism
given by implanting a barbell $\eta$ as $\boldsymbol{\eta}\in{\rm Diff}_{\partial}(X)$.
\begin{lem}
\label{lem:reverseori}Let $\eta$ be a barbell in an oriented $4$-manifold
$X$. Then, reversing the orientation of one cuff or the bar changes
the corresponding barbell diffeomorphism $\boldsymbol{\eta}$ to its inverse.
\end{lem}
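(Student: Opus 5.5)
Since $\boldsymbol{\eta}$ is determined up to isotopy rel.\ $\partial$ by $f|_{\mathcal B}$, it suffices to find, for each orientation change, a self-diffeomorphism $\phi$ of the model $\mathcal{NB}$ with three properties. First, $\phi$ realizes that orientation change on the model barbell $\mathcal B\subset\mathcal{NB}$. Second, $\phi$ is orientation preserving on $\mathcal{NB}$. Third, $\phi\beta\phi^{-1}\simeq\beta^{-1}$ rel.\ $\partial$. Given such a $\phi$, the barbell $\eta'$ with the changed orientation is modeled by the embedding $f\circ\phi$. Hence
\[
\boldsymbol{\eta'}=(f\phi)\beta(f\phi)^{-1}=f(\phi\beta\phi^{-1})f^{-1}\simeq f\beta^{-1}f^{-1}=\boldsymbol{\eta}^{-1}.
\]
Here we use that conjugating by $\phi$, which need not fix $\partial\mathcal{NB}$, still yields a diffeomorphism rel.\ $\partial$ of the image. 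Two details need care. The isotopy $\phi\beta\phi^{-1}\simeq\beta^{-1}$ must be rel.\ $\partial$ in the model. Also, $f\phi$ and $f$ have the same image, so the pushforward makes sense.

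To build $\phi$, I would use the arc picture $\mathcal{NB}=B^4\setminus(N(A_1)\sqcup N(A_2))$ with $B^4=B^3\times[-1,1]$. The cuff $S_2$ is the meridian sphere of $A_2$, and its orientation is induced from orientations of $B^4$ and $A_2$. Reversing the orientation of $A_2$ therefore reverses $S_2$. Take $\phi=r\times(-1)$, where $r$ is a reflection of $B^3$ in a plane. Arrange the picture so that the plane contains $A_1$ and the point $A_2\cap(B^3\times 0)$. Then $\phi$ is orientation preserving on $B^4$, preserves $A_1$ with its orientation, and reverses $A_2$, hence reverses the cuff $S_2$. I also need to check that $\phi$ preserves the other cuff $S_1$ with its orientation. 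Since $\phi$ preserves $A_1$ with orientation and preserves the ambient orientation, it preserves the induced orientation of the meridian $S_1$. It also preserves the bar, after choosing the bar inside the fixed plane. Finally, $r$ reverses the sense of rotation in $B^3\times0$ around the point $A_2\cap(B^3\times 0)$. So $\phi$ conjugates the positive spinning isotopy of $A_1$ around $A_2$ to the negative spinning isotopy. Its time-one map is $\beta^{-1}$ rel.\ $\partial$, because spinning negatively is the reverse of the positive spinning loop.

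For the other cuff, the symmetric argument applies. Alternatively, I would use the remark that $\beta$ equals, rel.\ $\partial$, negative spinning of $A_2$ around $A_1$, which makes the roles of the two cuffs symmetric.

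Reversing the bar swaps which cuff counts as first. I would take $\phi$ to be a diffeomorphism of $\mathcal{NB}$ exchanging the two $S^2\times D^2$ summands. Together with the remark, this sends positive spinning of $A_1$ around $A_2$ to positive spinning of $A_2$ around $A_1$, which is $\beta^{-1}$. The main obstacle is this bar case. I must choose an orientation-preserving swap of $A_1$ and $A_2$, for example a rotation by $\pi$, that sends each cuff to the other with the correct orientation. If it reverses both cuff orientations, I would compose with the cuff-reversal cases already proved. Then I must check carefully that the remark's identification gives exactly $\beta^{-1}$ and not $\beta$. I would also verify that all these identifications hold rel.\ $\partial$, which may require boundary-compatible choices of neighborhoods.
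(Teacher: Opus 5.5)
Your proposal is correct and follows essentially the paper's route. The paper's proof rests on the same two inputs: reversing the spinning direction inverts the map, and positive spinning of $A_1$ around $A_2$ agrees rel.\ $\partial$ with negative spinning of $A_2$ around $A_1$ (Budney--Gabai's Remark 5.4 ii)); your conjugation by an orientation-preserving symmetry of $\mathcal{NB}$ simply makes explicit what the paper calls ``by construction,'' and the bar-case sign check you flag does go through, since a rotation by $\pi$ swapping two parallel co-oriented arcs preserves the ambient and arc orientations, carries each cuff to the other orientation-preservingly, reverses the bar, and the remark then gives $\beta^{-1}$ directly.
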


\begin{proof}
This follows by construction (compare \cite[Theorem 5.6]{budney2021knotted3ballss4},
which is the ``harder case'': reversing the orientation of the bar).

All three statements follow from the following two observations: (1)
the diffeomorphism obtained by spinning an arc positively around a
sphere is the inverse (up to isotopy rel.\ $\partial$) of the diffeomorphism
obtained by spinning the same arc \emph{negatively} around the same
sphere. (2) The diffeomorphism on ${\cal NB}$ obtained by spinning
the first arc positively around the second sphere is isotopic rel.\ $\partial$
to the the diffeomorphism on ${\cal NB}$ obtained by spinning the
second arc negatively around the first sphere.
\end{proof}
Hence, the eight choices of the orientations of the cuffs and the
bar result in two (potentially) distinct barbell implantations up to isotopy rel.\ $\partial$, and
one is the inverse of the other up to isotopy rel.\ $\partial$. Note that it is important to be careful about the orientations for our proof of Theorem~\ref{thm:another-proof}. 

\section{\label{sec:knotted-s3}Barbells and knotted $S^{3}$'s}

In this section, we give an alternate proof (Theorem \ref{thm:another-proof}) of the third author's theorem \cite{tatsuoka2025splittingspheresunlinkeds2s}, which says that there are infinitely many pairwise non-isotopic splitting spheres of the trivial two-component 2-link $S^2 \sqcup S^2$ in $S^4$. We distinguish the splitting spheres by showing that they lift to non-isotopic non-separating 3-spheres in $S^1 \times S^3$, the double branched cover of $S^4$ along $S^2 \sqcup S^2$. That the 3-spheres are non-isotopic in $S^1 \times S^3$ is detected by Budney and Gabai's $W_3$ invariant for non-separating 3-spheres in $S^1 \times S^3$.

Before proving Theorem \ref{thm:another-proof}, we first give a brief review of their constructions of knotted 3-spheres and their $W_3$ invariant that they use to distinguish them. We note that we state only the necessary ingredients for the proof of Theorem \ref{thm:another-proof}, though for the reader's convenience we try to give precise references for their statements and proofs in \cite{budney2021knotted3ballss4}.

\begin{defn}
Let ${\rm Emb}_{0}(S^{3},S^{1}\times S^{3})$ be the space of embeddings
of $S^{3}$ into $S^{1}\times S^{3}$ that are homotopic to the\emph{
standard embedding ${\rm pt}\times S^{3}$ of $S^{3}$}. Let ${\rm Diff}_{0}(S^{1}\times S^{3})$
be the space of diffeomorphisms $S^{1}\times S^{3}\to S^{1}\times S^{3}$
that are homotopic to the identity.
\end{defn}

Let $\varphi\in{\rm Diff}_{0}(S^{1}\times S^{3})$. Budney and Gabai
define the $W_{3}$ invariant of $\varphi$, which only depends on
$\varphi({\rm pt}\times S^{3})$, by (1) associating to $\varphi$ some element $[\varphi]$
in the group $\pi_{2}{\rm Emb}_{\partial}(I,S^{1}\times B^{3};I_{0})$ (which depends on some additional choices), and (2) defining a homomorphism
from $\pi_{2}{\rm Emb}_{\partial}(I,S^{1}\times B^{3};I_{0})$ to
a more tractable vector space over $\mathbb{Q}$ (and showing
that the image of $[\varphi]$ does not depend
on the additional choices made in step (1)) \cite[Corollary 7.18 and Theorem 8.5]{budney2021knotted3ballss4}.

\begin{thm}[{\cite[Proposition 3.3, Corollary 7.18, Theorem 8.5, and Corollary 8.6]{budney2021knotted3ballss4}}]
\label{thm:w3thm}There exists a homomorphism
\begin{equation}
W_{3}:\pi_{0}{\rm Diff}_{0}(S^{1}\times S^{3})\to\mathbb{Q}[t_{1}^{\pm1},t_{3}^{\pm1}] \bigg/\left\langle t_{1}^{\alpha-\beta}t_{3}^{-\beta}-t_{1}^{\alpha}t_{3}^{\alpha-\beta}=t_{1}^{\beta-\alpha}t_{3}^{-\alpha}-t_{1}^{\beta}t_{3}^{\beta-\alpha}\right\rangle \label{eq:w3}
\end{equation}
that factors through the map $\pi_{0}{\rm Diff}_{0}(S^{1}\times S^{3}) \to \pi_{0} {\rm Emb}_{0}(S^{3},S^{1} \times S^{3})$ induced by 
\[
{\rm Diff}_{0}(S^{1}\times S^{3})\to{\rm Emb}_{0}(S^{3},S^{1}\times S^{3}):\varphi\mapsto\varphi({\rm pt}\times S^{3}).
\]
\end{thm}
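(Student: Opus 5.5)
This statement is a repackaging of results from \cite{budney2021knotted3ballss4}, so the plan is to assemble their construction in three steps and check that the pieces compose into a well-defined homomorphism that only sees $\varphi(\mathrm{pt}\times S^3)$.

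\textbf{Step 1: the map to $\pi_2$ of the embedding space.} Given $\varphi\in{\rm Diff}_0(S^1\times S^3)$, first isotope $\varphi$ so that it fixes a neighborhood of a point. This is possible because $\varphi$ is homotopic to the identity, hence fixes $\pi_1$ up to homotopy. Cutting along a fixed $\mathrm{pt}\times S^3$ neighborhood then relates $\varphi$ to an element of ${\rm Diff}_\partial(S^1\times B^3)$. Via the fibration
\[
{\rm Diff}_\partial(S^1\times B^3)\to{\rm Emb}_\partial(B^3,S^1\times B^3),
\]
together with the scanning/adjunction identification in \cite[Proposition 3.3]{budney2021knotted3ballss4}, relate $\pi_0{\rm Emb}_\partial(B^3,S^1\times B^3)$ to $\pi_2{\rm Emb}_\partial(I,S^1\times B^3;I_0)$. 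Scanning a 3-ball by a 2-parameter family of arcs gives the class $[\varphi]$.

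\textbf{Step 2: the algebraic invariant.} Define $W_3$ on $\pi_2{\rm Emb}_\partial(I,S^1\times B^3;I_0)$ as the degree-3 configuration-space (Goodwillie--Weiss type) invariant of \cite[Theorem 8.5]{budney2021knotted3ballss4}. It counts triple-point collisions of the 2-parameter family of arcs, weighted by the homotopy classes $t_1,t_3$ of the loops they cut out in $\pi_1=\mathbb Z$. This gives a homomorphism to the $\mathbb Q$-vector space shown in \eqref{eq:w3}. The relations in the quotient are exactly the ambiguity coming from the face/symmetry (STU-type) relations in the spectral sequence.

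\textbf{Step 3: independence of choices and factorization.} Show that the image of $[\varphi]$ does not depend on the choices made in Step 1, namely:
\begin{itemize}
\item the initial isotopy fixing a point;
\item the choice of the base arc $I_0$;
\item the path used to trivialize near the fixed slice.
\end{itemize}
Each such choice changes $[\varphi]$ by an element in the image of $\pi_1$ of a stabilizer group or by a conjugation. One then checks that $W_3$ kills or fixes these, which is \cite[Corollary 7.18]{budney2021knotted3ballss4}.

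Once this is done, the homomorphism property follows because composition of diffeomorphisms corresponds to concatenation in $\pi_2$. Factorization through $\varphi\mapsto\varphi(\mathrm{pt}\times S^3)$ follows because any $\psi$ fixing $\mathrm{pt}\times S^3$ (up to isotopy) yields a class coming from ${\rm Diff}_\partial(I\times S^3)$, which does not see the $S^1$-direction loops. Its $W_3$ therefore vanishes \cite[Corollary 8.6]{budney2021knotted3ballss4}.

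\textbf{Main obstacle.} The hard part is Step 3. Verifying that all choice ambiguities land in the quotient relations of \eqref{eq:w3} requires the detailed computations of \cite[Chapters 7--8]{budney2021knotted3ballss4}. Here I would simply cite those results rather than reprove them.
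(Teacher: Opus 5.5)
Your outline has the same architecture as the paper, which proves nothing itself and simply cites Budney--Gabai for a two-stage construction. First one forms a class $[\varphi]\in\pi_2{\rm Emb}_\partial(I,S^1\times B^3;I_0)$ that depends on auxiliary choices; then one applies a $\mathbb{Q}$-valued homomorphism whose value on $[\varphi]$ does not depend on those choices. However, your Step 1 as written does not produce an element of ${\rm Diff}_\partial(S^1\times B^3)$. Any orientation-preserving diffeomorphism can be isotoped to fix a neighborhood of a point, so this step uses nothing about ${\rm Diff}_0$. Moreover, cutting $S^1\times S^3$ along ${\rm pt}\times S^3$ gives $I\times S^3$, not $S^1\times B^3$.

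What is actually needed is to isotope $\varphi$ to be the identity on a tubular neighborhood $N(S^1\times q)\cong S^1\times D^3$ of the core circle. This is where $\varphi\in{\rm Diff}_0$ is used: $\varphi(S^1\times q)$ is homotopic to $S^1\times q$, hence isotopic to it since we are in dimension $4$, and the normal framing, an element of $\pi_1SO(3)$, must also be checked. Only then does $\varphi$ restrict to ${\rm Diff}_\partial(S^1\times B^3)$ on $S^1\times S^3\setminus N(S^1\times q)\cong S^1\times B^3$, with $({\rm pt}\times S^3)\setminus N(S^1\times q)$ serving as the standard $B^3$. The ambiguity that $W_3$ must be shown to kill is then the choice of this isotopy: two choices differ by a loop of framed circles. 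Your list of choices (point, base arc, trivializing path) does not identify this.

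Your factorization step also needs repair. Saying the class ``does not see the $S^1$-direction loops, so its $W_3$ vanishes'' is not an argument. A correct version runs as follows.
\begin{enumerate}
\item Suppose $\varphi({\rm pt}\times S^3)\approx\varphi'({\rm pt}\times S^3)$. Then $\psi=\varphi'^{-1}\varphi\in{\rm Diff}_0$ can be isotoped to fix $N({\rm pt}\times S^3)$.
\item In the simply connected $I\times S^3$, the arcs $I\times q$ and $\psi(I\times q)$ are homotopic rel endpoints, hence isotopic. So the core-circle isotopy can be chosen rel ${\rm pt}\times S^3$, modulo the same framing check.
\item For this choice, $\psi$ fixes the standard $B^3$, so $[\psi]=0$.
\item Independence of choices and additivity then give $W_3(\varphi)=W_3(\varphi')$.
\end{enumerate}
Separately, describing $W_3$ as counting ``triple-point collisions'' of the family cannot be literal, since a family of embedded arcs has no collisions. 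Because you only cite that invariant, this point is cosmetic.
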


\begin{figure}[h]
\begin{centering}
\includegraphics{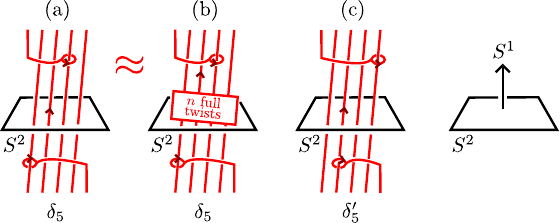}
\par\end{centering}
\caption{\label{fig:delta5prime}Barbells $\delta_{5}$ and $\delta_{5}'$; only their intersections with $S^{1}\times S^{2}\times0$ are drawn.
The arrows on the cuffs specify the orientation of the oriented intersection of the cuffs with $S^{1}\times S^{2}\times0$. The arrows on the bars specify the orientation of the bars.
Note (Remark~\ref{rem:twist-barbell}) that the barbells drawn in (a) and (b) are isotopic.}
\end{figure}

Budney and Gabai \cite[Section 7 and Theorem 8.1]{budney2021knotted3ballss4}
also compute the $W_{3}$ invariant for certain classes of barbell
diffeomorphisms of $S^{1}\times S^{3}$ \cite[Definition 6.11]{budney2021knotted3ballss4},
called the \emph{twisted $\theta_{k}$ implantations} $\boldsymbol{\theta_{k}}(\overrightarrow{v},\overrightarrow{w})$ for $\overrightarrow{v},\overrightarrow{w}\in \mathbb{Z}^{k-1}$.
We refer the reader to \cite[Definition 6.11]{budney2021knotted3ballss4}
for the general definition, and focus on the special cases 
\begin{equation}
\boldsymbol{\delta_{k}}:=\boldsymbol{\theta_{k}}((0,0,\cdots,0,1),(0,0,\cdots,0,1,0)),\ \boldsymbol{\delta_{k}'}:=\boldsymbol{\theta_{k}}((0,0,\cdots,0,1,0),(0,0,\cdots,0,1)).\label{eq:deltadeltaprime}
\end{equation}

We view $S^{1}\times S^{3}=(S^{1}\times B^3)\cup(S^{1}\times S^{2}\times[-1,1])\cup(\overline{S^{1}\times B^3})$. The barbells $\delta_{k}$ and $\delta_{k}'$ are contained in $S^{1}\times S^{2}\times [-1,1]$; their intersections with $S^{1}\times S^{2}\times 0$ are drawn in Figure~\ref{fig:delta5prime} for $k=5$ (here, we view $S^{2}=\mathbb{R}^{2}\cup\{\infty\}$; in fact they are contained in $S^{1}\times \mathbb{R}^{2}\times [-1,1]$).

In general, the bars of $\delta_{k}$ and $\delta_{k}'$ loop around
the $S^{1}$ direction $k$ times. The top cuff of $\delta_{k}$ links
the second strand from the right, and the bottom cuff of $\delta_{k}$
links the first strand from the left. The top cuff of $\delta_{k}'$
links the first strand from the right, and the bottom cuff of $\delta_{k}'$
links the second strand from the left.
\begin{thm}
\label{thm:w3comp}$W_{3}(\boldsymbol{\delta_{4}}),W_{3}(\boldsymbol{\delta_{5}}),\cdots$ are
linearly independent. Also, we have $W_{3}(\boldsymbol{\delta_{k}})=-W_{3}(\boldsymbol{\delta_{k}'})$.
\end{thm}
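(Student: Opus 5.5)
The plan is to reduce both claims to Budney and Gabai's explicit formula for $W_3$ of twisted $\theta_k$ implantations \cite[Theorem 8.1]{budney2021knotted3ballss4}. For $\overrightarrow{v},\overrightarrow{w}\in\mathbb{Z}^{k-1}$, that formula expresses $W_3(\boldsymbol{\theta_k}(\overrightarrow{v},\overrightarrow{w}))$ as an explicit signed sum of Laurent monomials $t_1^{a}t_3^{b}$. The exponents are determined by the linking data $\overrightarrow{v},\overrightarrow{w}$ and the number $k$ of times the bar wraps the $S^1$ direction. Substituting the special vectors in \eqref{eq:deltadeltaprime} should collapse the sum to a short expression. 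I expect a few monomials, with exponents roughly of the form $k-1$, $k-2$, $\pm1$, mixed between $t_1$ and $t_3$.

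The first step is to write down these explicit polynomials $P_k$ and $P_k'$ for $\boldsymbol{\delta_k}$ and $\boldsymbol{\delta_k'}$ as elements of $\mathbb{Q}[t_1^{\pm1},t_3^{\pm1}]$. I would then compare them. Interchanging $\overrightarrow{v}$ and $\overrightarrow{w}$ should correspond, up to the relation in \eqref{eq:w3}, to an overall sign change. Geometrically, swapping the roles of the two cuffs amounts to reversing the orientation of the bar. By Lemma~\ref{lem:reverseori} this inverts the diffeomorphism, and $W_3$ is a homomorphism, so the sign change is expected. The concrete check is that $P_k+P_k'$ is a sum of relators $\bigl(t_1^{\alpha-\beta}t_3^{-\beta}-t_1^{\alpha}t_3^{\alpha-\beta}\bigr)-\bigl(t_1^{\beta-\alpha}t_3^{-\alpha}-t_1^{\beta}t_3^{\beta-\alpha}\bigr)$ for suitable $(\alpha,\beta)$. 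If needed, one can also use the isotopy of Figure~\ref{fig:delta5prime}(a),(b) to identify $\delta_k'$ with $\delta_k$ with one orientation reversed, which gives $W_3(\boldsymbol{\delta_k})=-W_3(\boldsymbol{\delta_k'})$ directly.

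For linear independence, I would pass to a more tractable description of the quotient. The relation is invariant under the substitution $(\alpha,\beta)\leftrightarrow(\beta,\alpha)$ up to sign, so I would follow Budney--Gabai and construct a linear functional on the quotient that kills all relators. Concretely, I would find a basis of the quotient, say monomials $t_1^at_3^b$ in a fundamental domain for the symmetry the relations generate. Then I would show that each $P_k$ contains, in its normal form, a monomial whose exponent size (for instance total degree $k-1$) is not attained by $P_j$ for any $j<k$. A triangularity argument then gives independence for $k\ge4$; the small cases $k\le3$ presumably vanish or coincide, which is why the list starts at $4$.

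The main obstacle is this normal-form computation. One must verify that the leading monomial of $P_k$ is not killed modulo the relations, since a relator could cancel it against lower-degree terms. I would first try projecting onto the span of monomials with $|a|+|b|$ maximal and checking that the relations restricted to that degree leave the leading term nonzero. Failing that, I would cite the corresponding independence computation in \cite[Theorem 8.1 and Corollary 8.6]{budney2021knotted3ballss4} directly for these $\theta_k$ implantations.
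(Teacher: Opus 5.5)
Your proposal does not actually establish the sign identity. The shortcut you offer for it is wrong, and the fallback misreads the figure.

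\textbf{The geometric shortcut.} Interchanging $\overrightarrow{v}$ and $\overrightarrow{w}$ does not just relabel the cuffs of a fixed barbell. Reversing the bar of $\delta_k$ keeps the same subset and, by Lemma~\ref{lem:reverseori}, gives $\boldsymbol{\delta_k}^{-1}$. If $\delta_k'$ were that barbell you would get $\boldsymbol{\delta_k'}=\boldsymbol{\delta_k}^{-1}$ outright in $\pi_0$, but $\delta_k$ and $\delta_k'$ are different embedded barbells and nothing provides such an isotopy. The relationship actually visible in the paper (proof of Theorem~\ref{thm:another-proof}) is that $\rho(\delta_k)$ is isotopic to $\delta_k'$ with both cuffs and the bar reversed, where $\rho(x,y,z,t)=(-x,y,z,-t)$. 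This gives $\boldsymbol{\delta_k'}^{-1}\simeq\rho\,\boldsymbol{\delta_k}\,\rho^{-1}$. Since $\rho$ reverses the $S^1$ factor, it is not in ${\rm Diff}_0$. So Lemma~\ref{lem:reverseori} plus the homomorphism property does not force $W_3$ to be invariant under this conjugation, and that invariance is the real content of the claim.

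\textbf{The fallback.} Panels (a) and (b) of Figure~\ref{fig:delta5prime} are both $\delta_5$, differing by full twists of the bar (Remark~\ref{rem:twist-barbell}). They say nothing about $\delta_5'$.

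\textbf{What the sign identity needs.} The missing input is a statement in $\pi_2{\rm Emb}_\partial(I,S^1\times B^3;I_0)$. By \cite[Theorem 8.1]{budney2021knotted3ballss4}, $[\boldsymbol{\delta_k}]=F_k(k-1,k-2)$ and $[\boldsymbol{\delta_k'}]=F_k(k-2,k-1)$. By the proof of \cite[Corollary 7.18]{budney2021knotted3ballss4}, $F_k(p,q)=-F_k(q,p)$. Applying $W_3$ then gives the sign with no computation in the quotient ring. Your alternative, writing down $P_k$ and $P_k'$ and exhibiting $P_k+P_k'$ as a combination of relators, is viable in principle. As written it proves nothing, because the polynomials are never produced.

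\textbf{Linear independence.} The triangularity sketch does not work as stated. In each relator, $t_1^{\alpha-\beta}t_3^{-\beta}$ and $t_1^{\beta}t_3^{\beta-\alpha}$ have total degree $|\alpha-\beta|+|\beta|$, while $t_1^{\alpha}t_3^{\alpha-\beta}$ and $t_1^{\beta-\alpha}t_3^{-\alpha}$ have degree $|\alpha|+|\alpha-\beta|$. Combinations of relators can therefore cancel a leading monomial, so "project to top degree" is not well defined on the quotient. A leading-term argument would need control of the whole relation subspace. The paper simply cites \cite[Theorem 8.3]{budney2021knotted3ballss4}, which is exactly this independence statement, rather than Theorem 8.1 or Corollary 8.6.
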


\begin{proof}
The first part is \cite[Theorem 8.3]{budney2021knotted3ballss4}.
The second part also follows from \cite[Section 7 and Theorem 8.1]{budney2021knotted3ballss4},
where they compute the $W_{3}$ invariant for all the twisted $\theta_{k}$
implantations. For the reader's convenience, we give precise, minimal
number of references: first, Budney and Gabai define a family of elements
$F_{k}(p,q)\in\pi_{2}{\rm Emb}_{\partial}(I,S^{1}\times B^{3};I_{0})$
in \cite[Definition 7.16]{budney2021knotted3ballss4}, but their precise
definitions are not needed for this paragraph. By Equation~(\ref{eq:deltadeltaprime})
and \cite[Theorem 8.1]{budney2021knotted3ballss4}, we have $[\boldsymbol{\delta_{k}}]=F_{k}(k-1,k-2)$
and $[\boldsymbol{\delta_{k}'}]=F_{k}(k-2,k-1)$. Since $F_{k}(p,q)=-F_{k}(q,p)$
by \cite[Proof of Corollary 7.18]{budney2021knotted3ballss4}, we
have $W_{3}(\boldsymbol{\delta_{k}})=-W_{3}(\boldsymbol{\delta_{k}'})$.
\end{proof}
\begin{rem}
\label{rem:twist-barbell}Introducing any number of full twists to
the bars of $\delta_{k}$ as in Figure~\ref{fig:delta5prime} (b)
results in an isotopic barbell, since we are in $4$ dimensions and
so the twists can be undone.
\end{rem}

\begin{figure}[h]
\begin{centering}
\includegraphics{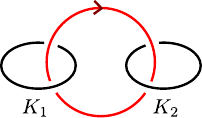}
\par\end{centering}
\caption{\label{fig:s1d3emb}An embedding of an $S^{1}$ (red) in the complement
of a trivial two-component $2$-link $K_1 \sqcup K_2 \subset S^{4}$. Only the intersection with the equatorial $S^3 \times 0 \subset  B^{4}\cup S^{3}\times[-1,1]\cup \overline{B^{4}} = S^4 $ is drawn.}
\end{figure}

Recall that the barbell $\delta _k $ is contained in $(S^{1}\times B^3)\cup(S^{1}\times S^{2}\times[-1,1]) \subset S^1 \times S^3 $. By identifying $(S^{1}\times B^3)\cup(S^{1}\times S^{2}\times[-1,1]) \cong S^1 \times B^3$, let us also view the barbell $\delta _k $ as a barbell contained in $S^1 \times B^3 $ and the barbell diffeomorphism $\boldsymbol{\delta _k}$ as a diffeomorphism of $S^1 \times B^3 $ rel.\ $\partial$.

Now we are ready to state and prove Theorem \ref{thm:another-proof}. Let $K=K_{1}\sqcup K_{2}$ be a trivial two-component $2$-link in $S^{4}$. The red circle in Figure~\ref{fig:s1d3emb} describes an embedding of $\iota :S^{1}\times B^3 \hookrightarrow S^{4}\setminus N(K)$ up to isotopy and framing. Let $\boldsymbol{\beta_{k}}$ be the diffeomorphism of $S^4$ rel.\ $K$ given by pushing forward $\boldsymbol{\delta_{k}} \in \mathrm{Diff}_\partial (S^1 \times B^3 )$ along $\iota$. Then, $\boldsymbol{\beta_{k}}$ is given by the barbell $\beta_k := \iota(\delta _k) \subset S^4 \setminus N(K)$. Note that by Remark~\ref{rem:twist-barbell}, the isotopy rel.\ $\partial N(K)$ class of $\beta_{k}$ does not depend on the framing of the red circle, and hence the isotopy rel.\ $K$ class of $\boldsymbol{\beta_{k}}$ does not either.

\begin{figure}[h]
\begin{centering}
\includegraphics{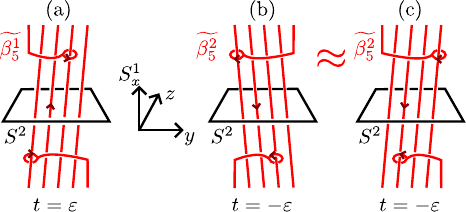}
\par\end{centering}
\caption{\label{fig:delta5deck}View $S^{1}\times S^{3}$ as $(S^{1}\times B^3)\cup(S^{1}\times S^{2}\times[-1,1])\cup(\overline{S^{1}\times B^3})$.
(a) and (b): The two lifts of the barbell $\beta_{5}$ to $S^{1}\times S^{3}$;
their intersections with the time slices $S^{1}\times S^{2}\times\pm\varepsilon$
are drawn. (c): An isotopic copy of the barbell from (b)}
\end{figure}

\begin{thm}[\cite{tatsuoka2025splittingspheresunlinkeds2s}]
\label{thm:another-proof}Let $\Sigma$ be the standard splitting
sphere of $K=K_{1}\sqcup K_{2}$ in $S^{4}$. Then, for all $k,\ell\ge4$,
$k\neq\ell$, the splitting spheres $\boldsymbol{\beta_{k}}\Sigma$ and $\boldsymbol{\beta_{\ell}}\Sigma$
are nonisotopic rel.\ $K$.
\end{thm}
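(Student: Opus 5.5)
We will lift the problem to the double branched cover $p\colon S^{1}\times S^{3}\to S^{4}$ branched along $K=K_{1}\sqcup K_{2}$, as outlined at the start of this section, and distinguish the lifts of the splitting spheres using the $W_{3}$ invariant of Theorem~\ref{thm:w3thm}.

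First, the standard splitting sphere $\Sigma$ is disjoint from $K$ and separates $K_{1}$ from $K_{2}$. Its preimage $p^{-1}(\Sigma)$ is therefore two disjoint copies of $\mathrm{pt}\times S^{3}$, each non-separating and swapped by the deck involution $\tau$. Next, suppose $\boldsymbol{\beta_{k}}\Sigma\approx\boldsymbol{\beta_{\ell}}\Sigma$ rel.\ $K$. An isotopy rel.\ $K$ in $S^{4}$ avoids the branch locus away from $K$ and lifts to a $\tau$-equivariant isotopy in $S^{1}\times S^{3}$. Hence a chosen lift component of $\boldsymbol{\beta_{k}}\Sigma$ is isotopic to some lift component of $\boldsymbol{\beta_{\ell}}\Sigma$.

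Second, we identify these lifts. The diffeomorphism $\boldsymbol{\beta_{k}}$ is supported in the image of $\iota$, which lies in $S^{4}\setminus N(K)$. So it lifts to $\tilde{\boldsymbol\beta}_{k}=\boldsymbol{\eta}\circ\boldsymbol{\eta}'$, where $\eta$ and $\eta'$ are the two lifts of the barbell $\beta_{k}$. These lifts are the ones drawn in Figure~\ref{fig:delta5deck}; they are disjoint and $\tau$-related, so the two implantations commute.

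The key geometric step is to show the following. Let $\mathrm{pt}\times S^{3}$ be one lift of $\Sigma$. Then:
\begin{itemize}
\item one lift barbell can be isotoped, in the complement of $\mathrm{pt}\times S^{3}$ or at least without changing the image, to become standard;
\item more usefully, $\tilde{\boldsymbol\beta}_{k}(\mathrm{pt}\times S^{3})$ agrees in $\pi_{0}\mathrm{Emb}_{0}$ with $\boldsymbol{\delta_{k}}\circ\boldsymbol{\delta_{k}'}^{\pm1}$ applied to $\mathrm{pt}\times S^{3}$, or with some combination whose $W_{3}$ we can read off.
\end{itemize}
Concretely, I expect Figure~\ref{fig:delta5deck}(a) to be isotopic to $\delta_{k}$. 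I expect (b), after the isotopy to (c), to be $\delta_{k}'$ with some cuff and bar orientations. Lemma~\ref{lem:reverseori} then tells us whether the resulting diffeomorphism is $\boldsymbol{\delta_{k}'}$ or its inverse.

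Third, we use that $W_{3}$ is a homomorphism factoring through $\pi_{0}\mathrm{Emb}_{0}$, together with Theorem~\ref{thm:w3comp}. Since $W_{3}(\boldsymbol{\delta_{k}'})=-W_{3}(\boldsymbol{\delta_{k}})$, the lift has invariant $2W_{3}(\boldsymbol{\delta_{k}})$ if the second lift contributes $(\boldsymbol{\delta'_{k}})^{-1}$. It has invariant $0$ in the bad case, so the orientation bookkeeping is essential.

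We must also handle the ambiguity of which lift component is matched, and possibly a reversal of the $S^{1}$ direction under $\tau$. Applying $\tau$ conjugates everything, so we should check that $W_{3}$ of the other component is $\pm$ the same class, or at worst the image under the symmetry $t_{i}\mapsto t_{i}^{-1}$. Linear independence of $W_{3}(\boldsymbol{\delta_{4}}),W_{3}(\boldsymbol{\delta_{5}}),\dots$ then gives $2W_{3}(\boldsymbol{\delta_{k}})\neq\pm 2W_{3}(\boldsymbol{\delta_{\ell}})$ for $k\ne\ell$; for the version twisted by $\tau$ we may need to apply the symmetry to one side first.

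The main obstacle is the orientation computation in the second step. We must track how the deck transformation acts on the orientations of the cuffs and the bar, and show that the second lift is $\delta_{k}'$ with orientations giving $\boldsymbol{\delta_{k}'}^{-1}$, so that the contributions add rather than cancel. I would attempt it by drawing both lifts in the $S^{1}\times S^{2}\times\pm\varepsilon$ slices, as in Figure~\ref{fig:delta5deck}. Since $\tau$ reverses the normal direction to the slice, it flips the induced orientation of the cuff equators. One then slides the result to match Figure~\ref{fig:delta5prime}, using Remark~\ref{rem:twist-barbell} to discard bar twists.
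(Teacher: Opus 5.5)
Your strategy is the paper's: lift to the double branched cover $S^{1}\times S^{3}$, write the lift of $\boldsymbol{\beta_k}$ as the composite of the implantations along the two lifted barbells, identify one lift with $\delta_k$ and the other with an orientation variant of $\delta_k'$, and conclude with Theorem~\ref{thm:w3comp}. The gap is that the only step with real content is left as an expectation. As you note yourself, if the second lift implanted $\boldsymbol{\delta_k'}$ rather than $\boldsymbol{\delta_k'}^{-1}$, the invariant would be $W_3(\boldsymbol{\delta_k})+W_3(\boldsymbol{\delta_k'})=0$ and nothing would follow. So until the orientation bookkeeping is actually done, the proposal does not prove the theorem.

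The paper settles this by writing the deck involution explicitly. It swaps the two copies of $S^1\times B^3$ and acts on $S^1\times S^2\times[-1,1]$ by $(x,y,z,t)\mapsto(-x,y,z,-t)$. The first lift is then $\delta_k$, and the second is its image under this involution. After a rotation by $\pi$ in the $yz$-plane, the second lift becomes $\delta_k'$ with \emph{both} cuffs and the bar reversed. That is an odd number (three) of reversals, so by Lemma~\ref{lem:reverseori} its implantation is $\boldsymbol{\delta_k'}^{-1}$. Hence $W_3(\widetilde{\boldsymbol{\beta}}_k)=W_3(\boldsymbol{\delta_k})-W_3(\boldsymbol{\delta_k'})=2W_3(\boldsymbol{\delta_k})$.

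Your treatment of which lift component gets matched is also more complicated than necessary. The appeal to a symmetry $t_i\mapsto t_i^{-1}$ is not justified by linear independence alone, and no conjugation by $\tau$ is needed. The sphere $\tau(\mathrm{pt}\times S^3)$ is isotopic to $\mathrm{pt}\times S^3$ with reversed orientation (translate in the $S^1$ direction). Since $\widetilde{\boldsymbol{\beta}}_k$ is a single diffeomorphism, the two lifts of $\boldsymbol{\beta_k}\Sigma$ are isotopic to $\widetilde{\boldsymbol{\beta}}_k(\mathrm{pt}\times S^3)$ and its reverse. An isotopy $\boldsymbol{\beta_k}\Sigma\approx\boldsymbol{\beta_\ell}\Sigma$ rel.\ $K$ therefore gives $\widetilde{\boldsymbol{\beta}}_k(\mathrm{pt}\times S^3)\approx\widetilde{\boldsymbol{\beta}}_\ell(\mathrm{pt}\times S^3)$ up to orientation. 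The orientation is then forced by $H_3(S^1\times S^3)$, on which both lifted diffeomorphisms act trivially. This gives equality in $\pi_0\mathrm{Emb}_0$, which is ruled out by $2W_3(\boldsymbol{\delta_k})\neq 2W_3(\boldsymbol{\delta_\ell})$ and Theorem~\ref{thm:w3thm}.
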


\begin{proof}Consider the double branched cover of $S^{4}$ along $K$, which is $S^{1}\times S^{3}$. We distinguish $\Sigma$, $\boldsymbol{\beta_{k}}\Sigma$, and $\boldsymbol{\beta_{\ell}}\Sigma$ by distinguishing their lifts in $S^1 \times S^3$ using Theorem~\ref{thm:w3comp}. Let $\widetilde{\beta_{k}^{1}},\widetilde{\beta_{k}^{2}}$
be the two lifts of the barbell $\beta_{k}$ to $S^{1}\times S^{3}$.
Then, $\boldsymbol{\widetilde{\beta_{k}^{2}}}\circ \boldsymbol{\widetilde{\beta_{k}^{1}}}$ is a lift of $\boldsymbol{\beta_{k}}$.
Since the two lifts of $\Sigma$ to $S^{1}\times S^{3}$ are isotopic to $S_{std}^{3}:=\{\ast\}\times S^{3}$ and it with the opposite orientation, the two lifts of $\boldsymbol{\beta_{k}}\Sigma$ are isotopic to $\boldsymbol{\widetilde{\beta_{k}^{2}}} \boldsymbol{\widetilde{\beta_{k}^{1}}}S^3 _{std}$ and it with the opposite orientation. Hence, we are left to show that $S_{std}^{3}$, $\boldsymbol{\widetilde{\beta_{k}^{2}}\widetilde{\beta_{k}^{1}}}S_{std}^{3}$, 
and $\boldsymbol{\widetilde{\beta_{\ell}^{2}}\widetilde{\beta_{\ell}^{1}}}S_{std}^{3}$
are pairwise nonisotopic.

Let $\rho:S^{1}\times S^{3}\to S^{1}\times S^{3}$ be the nontrivial
deck transformation of the double branched cover of $S^{4}$ along $K$.
Let us describe $\rho$ more explicitly: view $S^{1}\times S^{3}$
as $(S^{1}\times B^3)\cup(S^{1}\times S^{2}\times[-1,1])\cup(\overline{S^{1}\times B^3})$.
Then, $\rho$ swaps $S^{1}\times B^3$ and $\overline{S^{1}\times B^3}$.
For $S^{1}\times S^{2}\times[-1,1]$, view $S^{1}=\mathbb{R}/\mathbb{Z}$
and let it have coordinate $x$, view $S^{2}=\mathbb{R}^{2}\cup\{\infty\}$
and let it have coordinates $y,z$, and let $[-1,1]$ have coordinate
$t$. Then $\rho$ acts on $S^{1}\times S^{2}\times[-1,1]$ as $(x,y,z,t)\mapsto(-x,y,z,-t)$.

Hence, we see that the barbells $\widetilde{\beta_{k}^{1}},\widetilde{\beta_{k}^{2}}$
are as in Figure~\ref{fig:delta5deck}~(a)~and~(b). In particular, $\widetilde{\beta_{k}^{1}}$
is isotopic to $\delta_{k}$, and $\widetilde{\beta_{k}^{2}}$ is
isotopic (rotate by $\pi$ in the $yz$ direction) to the barbell
in Figure~\ref{fig:delta5deck}~(c), which is obtained from $\delta_{k}'$
by reversing the orientations of the two cuffs and the bar. Hence, the barbell diffeomorphisms 
$\boldsymbol{\widetilde{\beta_{k}^{2}}}$ and $\boldsymbol{\delta_{k}'}$ are inverses of each other by Lemma~\ref{lem:reverseori}, and so we have 
\[
W_{3}(\boldsymbol{\widetilde{\beta_{k}^{2}}})=-W_{3}(\boldsymbol{\delta_{k}'})=W_{3}(\boldsymbol{\delta_{k}})=W_{3}(\boldsymbol{\widetilde{\beta_{k}^{1}}})
\]
by Theorem~\ref{thm:w3comp}. Therefore, we have
\[
W_{3}(\boldsymbol{\widetilde{\beta_{k}^{2}}}\circ\boldsymbol{\widetilde{\beta_{k}^{1}}})=2W_{3}(\boldsymbol{\delta_{k}}),
\]
and so $S_{std}^{3}$, $\boldsymbol{\widetilde{\beta_{k}^{2}}\widetilde{\beta_{k}^{1}}}S_{std}^{3}$, 
and $\boldsymbol{\widetilde{\beta_{\ell}^{2}}\widetilde{\beta_{\ell}^{1}}}S_{std}^{3}$
are pairwise nonisotopic by Theorems~\ref{thm:w3thm} and \ref{thm:w3comp}.
\end{proof}

\section{\label{sec:brunnian}Brunnian links of $3$-balls}

In this section, we first construct two component Brunnian links
of $3$-balls (Theorem~\ref{thm:2-cpt Brunnian}) as a warmup. Then,
we consider the general case $n\ge 2$ and construct $n$-component Brunnian
links of $3$-balls (Theorem~\ref{thm:n-cpt-Brunnian}) using a construction motivated
by Bing doubling. We detect their linking using the splitting spheres of \cite{tatsuoka2025splittingspheresunlinkeds2s} (Theorem~\ref{thm:another-proof}).

In this section, we construct diffeomorphisms of $S^4$ rel.\ $F$ for various $F\subset S^4$ as follows, similarly to the discussion right before Theorem~\ref{thm:another-proof}. We first construct embeddings $c: S^1 \hookrightarrow S^4 \setminus N(F)$, which in turn describe embeddings of $S^1 \times B^3$ in $S^4 \setminus N(F)$ up to isotopy and framing. Then, we view $\boldsymbol{\delta _k }$ as a diffeomorphism of $S^1 \times B^3$ rel.\ $\partial$ and pushforward $\boldsymbol{\delta _k }$ (resp.\ $\boldsymbol{\delta _k }^{-1}$) along these embeddings. Recall that the resulting diffeomorphisms, up to isotopy rel.\ $F$, only depend on the embedding $c$ of the core $S^1$; hence we refer to this diffeomorphism as the diffeomorphism given by \emph{pushing forward $\boldsymbol{\delta _k }$ (resp.\ $\boldsymbol{\delta _k }^{-1}$) along $c$}.

\begin{figure}[h]
\begin{centering}
\includegraphics{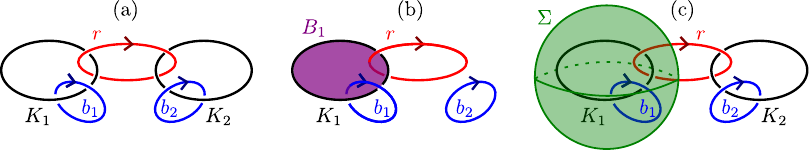}
\par\end{centering}
\caption{\label{fig:brunnian-red-blue}(a): A trivial two-component $2$-link
$K_{1}\sqcup K_{2}$ in $S^{4}$, a red circle $r$ and two blue circles $b_1 ,b_2$
in $S^{4}\setminus(K_{1}\sqcup K_{2})$. Pushing forward $\boldsymbol{\delta_{k}}$
along $r$ and $\boldsymbol{\delta_{k}}^{-1}$ along $b_1$ and $b_2$ gives rise to two-component Brunnian
links of $3$-balls in $S^{4}$. (b): If we ignore $K_{2}$, then
only $r$ and $b_1$ intersect $B_{1}$, and the corresponding 
diffeomorphisms cancel out. Note that $B_{1}$ intersects the $S^{3}\times0$
time slice in the purple 2-disk. (c): (a) together with the splitting sphere
$\Sigma=\partial N(B_{1})$, which intersects the $S^3\times 0$ slice in the green 2-sphere. Note that the blue circles do not intersect
$\Sigma$.}
\end{figure}

\begin{thm}
\label{thm:2-cpt Brunnian}Let $K=K_{1}\sqcup K_{2}$ be a trivial
2-link in $S^{4}$. Then there exists an infinite family of 2-component links of 3-balls $B_{1}\sqcup B_{2}$ with $\partial B_1 = K_1$ and $\partial B_2 = K_2 $ such that
for any two distinct links $B_{1}\sqcup B_{2}$ and $B_{1}'\sqcup B_{2}'$ in the family,
$B_1 \sqcup B_2\not\approx B_1'\sqcup B_2'$ rel.\ $K_{1}\sqcup K_{2}$, but $B_{1}\approx B_{1}'$
rel $K_{1}$ and $B_{2}\approx B_{2}'$ rel.\ $K_{2}$.
\end{thm}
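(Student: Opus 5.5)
We use the configuration of Figure~\ref{fig:brunnian-red-blue}. Fix the unlink of spanning $3$-balls $B^0_1\sqcup B^0_2$ for $K$. For each $k\ge4$, let $\Phi_k$ be the diffeomorphism of $S^4$ rel.\ $K$ obtained by pushing forward $\boldsymbol{\delta_k}$ along the red circle $r$, and $\boldsymbol{\delta_k}^{-1}$ along the blue circles $b_1$ and $b_2$. Since the three solid tori can be chosen disjoint, these three diffeomorphisms commute. The family will be $B_1\sqcup B_2:=\Phi_k(B^0_1\sqcup B^0_2)$ for $k\ge 4$. The proof then has two parts: each component separately is unknotted, and the links are pairwise distinct.

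\emph{Each component is unknotted.} To show $\Phi_k(B^0_1)\approx B^0_1$ rel.\ $K_1$, we forget $K_2$ and $B^0_2$. Then, as in Figure~\ref{fig:brunnian-red-blue}~(b), $b_2$ misses $B^0_1$ and lies in a ball disjoint from it, so its pushforward is supported away from $B^0_1$ and can be dropped. Without the obstruction from $K_2$, the red circle $r$ becomes isotopic to $b_1$ in $S^4\setminus K_1$. By the paper's remark that pushforwards depend only on the isotopy class of the core circle, the pushforward of $\boldsymbol{\delta_k}$ along $r$ is then isotopic rel.\ $K_1$ to the pushforward along $b_1$. That pushforward cancels with $\boldsymbol{\delta_k}^{-1}$ along $b_1$, so $\Phi_k$ restricted to $B^0_1$ is isotopic rel.\ $K_1$ to the identity. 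The symmetric argument, with $b_2$ in place of $b_1$, gives $\Phi_k(B^0_2)\approx B^0_2$ rel.\ $K_2$. The delicate point is arranging the picture so that $r$ really is isotopic to $b_i$ once $K_j$ is deleted. The isotopy must be in the complement of $K_i$; it need not avoid $B^0_i$, since we move the diffeomorphism and only then apply it.

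\emph{The links are pairwise distinct.} Let $\Sigma=\partial N(B^0_1)$, a splitting sphere for $K$ disjoint from $B^0_1\sqcup B^0_2$. If $\Phi_k(B^0_1\sqcup B^0_2)\approx\Phi_\ell(B^0_1\sqcup B^0_2)$ rel.\ $K$, then, since a thin neighborhood boundary of the first component is canonical up to isotopy rel.\ $K$, we get $\Phi_k\Sigma\approx\Phi_\ell\Sigma$ rel.\ $K$. By Figure~\ref{fig:brunnian-red-blue}~(c) the blue circles miss $\Sigma$. We will choose the blue solid tori in the complement of $\Sigma$, so the blue pushforwards fix $\Sigma$ pointwise. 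Hence $\Phi_k\Sigma$ equals the image of $\Sigma$ under the pushforward along $r$ alone. We then check that $r$, viewed in $S^4\setminus N(K)$, is isotopic to the red circle of Figure~\ref{fig:s1d3emb}. Granting this, $\Phi_k\Sigma\approx\boldsymbol{\beta_k}\Sigma$, and Theorem~\ref{thm:another-proof} gives $\boldsymbol{\beta_k}\Sigma\not\approx\boldsymbol{\beta_\ell}\Sigma$ for $k\ne\ell$, which is a contradiction. It follows that the links are pairwise non-isotopic, and in particular none of them is the unlink, whose image is $\Phi_k$ with $k$ formally zero, i.e.\ the identity.

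\emph{Main obstacle.} The hardest step is the diagrammatic one: we must exhibit a single placement of $r,b_1,b_2$ satisfying three conditions simultaneously. First, $r$ must be isotopic to the circle of Figure~\ref{fig:s1d3emb} in $S^4\setminus K$. Second, $r$ must become isotopic to $b_i$ after deleting $K_j$. Third, $b_1$ and $b_2$ must be disjoint from $\Sigma$ and each must miss the other component's ball. To realize this, take $r$ to represent a commutator-like loop in $\pi_1(S^4\setminus K)\cong F_2=\langle m_1,m_2\rangle$: a loop passing through both components, so that it is homotopically $m_1m_2$-type but becomes $m_i$ when $K_j$ is erased. Take $b_i$ to be a parallel copy of $r$ pushed off $K_j$ entirely. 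In dimension $4$, homotopy of circles implies isotopy, so these isotopies follow from the $\pi_1$ computations. The remaining care is needed to keep the $b_i$ off $\Sigma$.
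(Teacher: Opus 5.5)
Your proposal is correct and follows the paper's proof essentially step for step. It uses the same $r,b_1,b_2$ configuration, cancels $\boldsymbol{r_k}$ against $\boldsymbol{b_{k,1}}$ once $K_2$ is forgotten (with $b_2$ disjoint from $B_1$), and detects linking via the splitting sphere $\partial\overline{N}(B_1)$, which the blue circles miss, together with Theorem~\ref{thm:another-proof}.

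Your $\pi_1$ description of the configuration is a reasonable way to justify what the paper reads off the figure: $r$ of type $m_1m_2$, $b_i$ homotopic to a meridian of $K_i$, and homotopy implies isotopy for circles in dimension $4$. Two small wording issues:
\begin{itemize}
\item ``Commutator-like'' is a misnomer, since a commutator would die when either component is erased.
\item The claim that no member is the unlink should not rest on ``$k$ formally zero.'' It follows because the proof of Theorem~\ref{thm:another-proof} also distinguishes $\boldsymbol{\beta_k}\Sigma$ from $\Sigma$. In any case, the statement only needs pairwise distinctness.
\end{itemize}
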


\begin{proof}
Choose 3-balls $B_{1}$ and $B_{2}$ spanning $K_{1}$ and $K_{2}$ in $S^{4}$.
Let $\boldsymbol{r_k }$ be the diffeomorphism of $S^{4}$ rel.\ $K$ given by the pushing forward $\boldsymbol{\delta_{k}}$ along the red circle $r$ of Figure~\ref{fig:brunnian-red-blue}~(a).
Similarly, let $\boldsymbol{b_{k,1}}$ (resp.\ $\boldsymbol{b_{k,2}}$) be the
the pushforward of $\boldsymbol{\delta_{k}}^{-1}$ along the blue circle $b_1$ (resp.\ $b_2$) of Figure~\ref{fig:brunnian-red-blue}~(a).
Let $\boldsymbol{\gamma_{k}}:=\boldsymbol{r_k} \circ \boldsymbol{b_{k,1}} \circ \boldsymbol{b_{k,2}}$. We will show that $\{B_1 \sqcup B_2 \}\sqcup\{\boldsymbol{\gamma_{k}}(B_{1})\sqcup \boldsymbol{\gamma_{k}}(B_{2})\}_{k\geq4}$
is our desired infinite family.

First, since $b_2$ is disjoint from $B_1$, we have $\boldsymbol{\gamma_{k}}(B_{1})\approx \boldsymbol{r_k}\circ \boldsymbol{b_{k,1}}(B_{1})$ rel.\ $K$. If we ignore $K_2$, then $r$ and $b_1$ are isotopic rel.\ $K_1$. Hence, we have $\boldsymbol{r_k}\circ \boldsymbol{b_{k,1}}(B_{1})\approx B_{1}$ rel.\ $K_{1}$. The same argument shows $\boldsymbol{\gamma_{k}}(B_{2})\approx B_{2}$
rel $K_{2}$.

Now we will show that $\boldsymbol{\gamma_{k}}(B_{1})\sqcup \boldsymbol{\gamma_{k}}(B_{2})\not\approx B_{1}\sqcup B_{2}$
rel.\ $K$. Suppose that they were isotopic. Consider a small regular
neighborhood $\overline{N}(B_{1})$ of $B_{1}$ in $S^{4}$. Then
$\overline{N}(B_{1})\cong B^{4}$, and so $\Sigma:=\partial\overline{N}(B_{1})\cong S^{3}$
is a splitting sphere for the two-component link $K$. If $\boldsymbol{\gamma_{k}}(B_{1})\sqcup\boldsymbol{\gamma_{k}}(B_{2})\approx B_{1}\sqcup B_{2}$
rel.\ $K$, then the subspaces $\boldsymbol{\gamma_{k}}(\overline{N}(B_{1}))$
and $\overline{N}(B_{1})$ of $S^{4}$ are isotopic via an ambient
isotopy of $S^{4}$ rel.\ $K$; in particular, we must have that
$\boldsymbol{\gamma_{k}}(\Sigma)\approx\Sigma$ rel.\ $K$. On the other hand,
since the blue circles can be isotoped rel.\ $K$ to be disjoint
from $\Sigma$, then
$\boldsymbol{\gamma_{k}}(\Sigma)\approx \boldsymbol{r_{k}}(\Sigma)$ rel.\ $K$. However, $\boldsymbol{r_k}$ is the diffeomorphism $\boldsymbol{\beta_k}$ from Theorem~\ref{thm:another-proof} and so 
$\boldsymbol{r_{k}}(\Sigma)\not\approx\Sigma$ rel.\ $K$ by Theorem~\ref{thm:another-proof},
giving us our contradiction.
Since varying $k$ produces non-isotopic
splitting spheres $\boldsymbol{\beta_{k}}(\Sigma)$, the same proof shows that
$\boldsymbol{\gamma_{k}}(B_{1})\sqcup\boldsymbol{\gamma_{k}}(B_{2})\not\approx\boldsymbol{\gamma_{\ell}}(B_{1})\sqcup\boldsymbol{\gamma_{\ell}}(B_{2})$
rel.\ $K$ for $k\neq\ell$, $k,\ell\geq4$, giving us our infinite
family of 2-component Brunnian 3-ball links.
\end{proof}
\begin{figure}[h]
\begin{centering}
\includegraphics{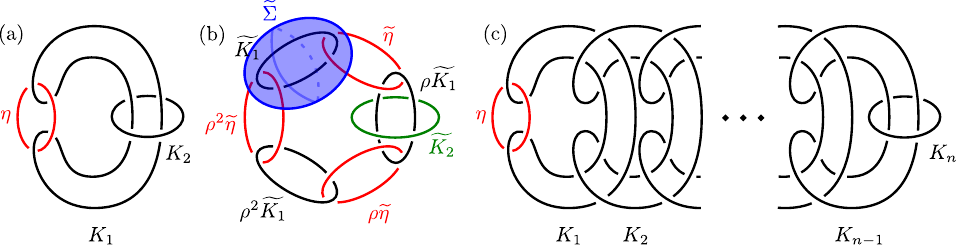}
\par\end{centering}
\caption{\label{fig:brunnian-bing}(a), (c): Pushing forward $\boldsymbol{\delta_{k}}$ along
$\eta$ gives rise to Brunnian links of $3$-balls
in $S^{4}$. (b): the $3$-fold cyclic branched cover of $S^{4}$
over $K_{2}$ and the lift $\widetilde{\Sigma}$ of the splitting
$3$-sphere $\Sigma=\partial N(B_{1})$ that splits $\widetilde{K_{1}}$
from $\rho\widetilde{K_{1}}\sqcup\rho^{2}\widetilde{K_{1}}$.}
\end{figure}

Now we prove the general case.
\begin{thm}
\label{thm:n-cpt-Brunnian}Let $K=K_{1}\sqcup \cdots \sqcup K_{n}$ be
a trivial $n$-component 2-link in $S^{4}$. Then there exists an infinite family of $n$-component links of 3-balls $B_{1}\sqcup \cdots \sqcup B_{n}$ with
$\partial B_{i}=K_{i}$ such that for any two distinct links $B=B_{1}\sqcup\cdots\sqcup B_{n}$
and $B' =B_{1}'\sqcup\cdots\sqcup B_{n}'$ in the family, $B$ and $B'$ are nonisotopic rel.\ $K$,
but for each $i=1,\cdots,n$, the $(n-1)$-component links of 3-balls $B\setminus B_{i}$
and $B'\setminus B_{i}'$ are isotopic rel.\ $K\setminus K_{i}$.
\end{thm}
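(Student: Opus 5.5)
We build the family by pushing forward $\boldsymbol{\delta_k}$ along a single circle $\eta\subset S^4\setminus N(K)$ obtained by Bing doubling, as in Figure~\ref{fig:brunnian-bing}. Start with the red circle $r$ of Figure~\ref{fig:s1d3emb}, which links $K_1$ and $K_2$ in the way that makes $\boldsymbol{r_k}=\boldsymbol{\beta_k}$. Then iteratively replace the strand of the circle that passes near $K_j$, for $j=2,\dots,n$, by a Bing-double-type clasp around $K_j,\dots,K_n$. Equivalently, $\eta$ is an iterated commutator in $\pi_1(S^4\setminus K)=F_n$ involving every meridian. The candidate family is $\boldsymbol{\eta_k}(B)$ for $k\ge4$, together with $B$.

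\textbf{Step 1: Brunnian property.} Fix $i$ and forget $K_i$ and $B_i$. In $S^4\setminus N(K\setminus K_i)$ the circle $\eta$ becomes null-homotopic: deleting one meridian kills the iterated commutator. Since we are in dimension $4$, homotopy implies isotopy for circles, so $\eta$ is isotopic, rel.\ $K\setminus K_i$, into a small ball disjoint from $B\setminus B_i$. Hence $\boldsymbol{\eta_k}(B\setminus B_i)\approx B\setminus B_i$ rel.\ $K\setminus K_i$. Barbell diffeomorphisms depend only on the isotopy class of the core circle, which justifies this step.

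\textbf{Step 2: nontriviality and distinctness.} As in Theorem~\ref{thm:2-cpt Brunnian}, it suffices to show that the splitting sphere $\Sigma=\partial\overline N(B_1)$, which separates $K_1$ from $K_2\sqcup\cdots\sqcup K_n$, satisfies $\boldsymbol{\eta_k}\Sigma\not\approx\boldsymbol{\eta_\ell}\Sigma$ rel.\ $K$ for $k\ne\ell$, and $\boldsymbol{\eta_k}\Sigma\not\approx\Sigma$. If these hold, any isotopy of links of balls rel.\ $K$ would carry $\Sigma$ to $\boldsymbol{\eta_k}\Sigma$.

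To reduce to two components I would pass to cyclic branched covers. Begin with the $3$-fold cyclic branched cover of $S^4$ over $K_n$, as in Figure~\ref{fig:brunnian-bing}(b). This is again $S^4$, and $\Sigma$ lifts to $\widetilde\Sigma$, which splits $\widetilde{K_1}$ from the other lifts. In this cover the Bing clasp around $K_n$ unwinds: the lift of $\eta$ consists of several circles. Filling in or forgetting the extra lifted components that no longer separate things, one of these circles should become the $(n-1)$-component construction and the rest should become trivial.

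Iterating, I reach the two-component situation of $K_1\sqcup K_2$ with $\eta$ lifted to a copy of $r$. There Theorem~\ref{thm:another-proof} distinguishes the splitting spheres. Isotopy of splitting spheres rel.\ $K$ lifts to equivariant isotopy in the cover, and forgetting components preserves isotopy, so nonisotopy in the final picture implies nonisotopy upstairs.

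\textbf{Main obstacle.} The hard step is the lifting analysis. I must check that in the branched cover the lift of $\boldsymbol{\eta_k}$ is a composition of barbell diffeomorphisms in which exactly one lift survives as the $r$-type barbell relevant to $\widetilde\Sigma$. Every other lift must be isotopable off $\widetilde\Sigma$, or cancel in pairs as the blue circles did in Theorem~\ref{thm:2-cpt Brunnian}. The orientation bookkeeping of Lemma~\ref{lem:reverseori} must also not produce cancellation of the $W_3$ contributions. I would first try choosing the cover degree (3 rather than 2) precisely so that the surviving lifts do not cancel, and then track the lifted circles explicitly in pictures.
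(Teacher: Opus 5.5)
Your overall architecture matches the paper's. You use the Bing-double circle $\eta$. Brunnianity holds because $\eta$ becomes null-homotopic once any $K_i$ is forgotten. Nontriviality comes from cyclic branched covers over $K_n$ that reduce $n$ to $n-1$, ending with a $3$-fold cover over $K_2$ and Theorem~\ref{thm:another-proof}. Carrying the single sphere $\Sigma=\partial\overline N(B_1)$ through all the covers, instead of inducting on the statement about links of $3$-balls as the paper does, is a harmless variation.

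The gap is that the step you call the main obstacle is the entire content of the nontriviality argument, and you leave it at ``should''. Moreover, with $\eta$ pinned down only as ``an iterated commutator involving every meridian'', that step can fail. Write $x_i$ for a meridian of $K_i$ and $\rho^j\tilde x_i$ for meridians of the lifts $\rho^j\widetilde{K_i}$. For $\eta=[[x_1,x_2],x_3]$, the lift starting in sheet $j$ of the $m$-fold cover over $K_3$ is $[\rho^j\tilde x_1,\rho^j\tilde x_2]\,[\rho^{j+1}\tilde x_1,\rho^{j+1}\tilde x_2]^{-1}$. After forgetting everything except $\widetilde{K_1}\sqcup\widetilde{K_2}$, \emph{two} lifts survive, representing $[\tilde x_1,\tilde x_2]$ and its inverse. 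So ``one lift becomes the $(n-1)$-construction and the rest become trivial'' is false for this $\eta$. What makes the reduction work is a nesting in which $K_n$ enters only through one innermost clasp, e.g.\ $\eta=[x_1,[x_2,\ldots,[x_{n-1},x_n]\cdots]]$. Its sheet-$0$ lift is the same word with $[x_{n-1},x_n]$ replaced by $\tilde x_{n-1}(\rho\tilde x_{n-1})^{-1}$, which becomes the $(n-1)$-component word once $\rho\widetilde{K_{n-1}}$ is forgotten. Every lift $\rho^j\widetilde\eta$ with $j\neq 0$ has $\rho^j\tilde x_1$ as an entry of its outermost commutator, so it becomes null-homotopic and can be moved off everything.

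At the bottom, the point of the $3$-fold cover is to supply a spare lift to forget; it is not about $W_3$ bookkeeping. For $\eta=[x_1,x_2]$ the three lifts over $K_2$ are $\rho^j\tilde x_1(\rho^{j+1}\tilde x_1)^{-1}$ for $j=0,1,2$, with indices mod $3$. Isotopy rel.\ $\bigsqcup_{i=0}^{2}\rho^{i}\widetilde{K_1}$ implies isotopy rel.\ $\widetilde{K_1}\sqcup\rho\widetilde{K_1}$, so you may forget $\rho^2\widetilde{K_1}$. Then:
\begin{itemize}
\item $\widetilde\Sigma$ still splits $\widetilde{K_1}$ from $\rho\widetilde{K_1}$;
\item $\widetilde\eta$ is the red circle of Figure~\ref{fig:s1d3emb} for this two-component unlink;
\item $\rho\widetilde\eta$ and $\rho^2\widetilde\eta$ become meridians of $\rho\widetilde{K_1}$ and $\widetilde{K_1}$, so they and their tubular neighborhoods can be isotoped off $\widetilde\Sigma$.
\end{itemize}
Hence $\boldsymbol{\widetilde\eta_k}\widetilde\Sigma\approx\boldsymbol{\beta_k}\widetilde\Sigma$, and Theorem~\ref{thm:another-proof} applies with no orientation tracking. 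In a $2$-fold cover, both lifts of $\eta$ link both lifts of $K_1$, nothing can be forgotten, and you would be back to computing orientations.

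Finally, you should say why the lifted isotopy ends at $\widetilde\Sigma$ rather than at some other lift $\rho^i\widetilde\Sigma$. The lifted isotopy fixes every lift of $K$, and $\widetilde\Sigma$ is the only lift of $\Sigma$ enclosing $\widetilde{K_1}$ alone.
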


\begin{proof}
Consider the circle $\eta$ in the complement of
the $n$-component unlink $K=K_{1}\sqcup\ldots\sqcup K_{n}$ in $S^{4}$
given in Figure~\ref{fig:brunnian-bing}~(c). Let $B=B_{1}\sqcup\cdots\sqcup B_{n}$,
where $B_{i}$ is a $3$-ball with $\partial B_{i}=K_{i}$. Let $\boldsymbol{\eta_{k}}$ be the diffeomorphism of $S^4$ rel.\ $K$ given by pushing forward  $\boldsymbol{\delta_{k}}$ along $\eta$. We will show that $\{B\}\sqcup \{\boldsymbol{\eta_{k}}(B)\}_{k\geq4}$ is our desired infinite family.

First note that by construction, if we ignore any of the $K_{i}$'s,
then we can ``unlink'' $\eta$ from $K\setminus K_{i}$, i.e.\ we can isotope $\eta$ rel.\ $K\setminus K_{i}$ into a $4$-ball disjoint from $K\setminus K_{i}$, and hence we can further isotope  $\eta$ rel.\ $K\setminus K_{i}$ to be disjoint from $B$.
Thus, the $(n-1)$-component 3-ball links $\boldsymbol{\eta_{k}}(B)\setminus \boldsymbol{\eta_{k}}(B_{i})$
and $B\setminus B_{i}$ are isotopic rel.\ $K\setminus K_{i}$ for
$i=1,...,n$.

Now, we induct on $n$ and show that $B, \boldsymbol{\eta_{k}}(B), \boldsymbol{\eta_{\ell }}(B)$ are pairwise nonisotopic rel.\ $K$ for distinct $k,\ell \ge 4$. The base case of the induction is $n=2$: see Figure~\ref{fig:brunnian-bing}~(a).  Just as in the proof of Theorem~\ref{thm:2-cpt Brunnian}, to show
that $\boldsymbol{\eta_{k}}(B_{1})\sqcup\boldsymbol{\eta_{k}}(B_{2})\not\approx B_{1}\sqcup B_{2}$
rel.\ $K$, it suffices to show that $\boldsymbol{\eta_{k}}(\Sigma)\not\approx\Sigma$
in $S^{4}$ rel.\ $K$ for the splitting sphere $\Sigma:=\partial \overline{N}(B_{1})$
for $K$. So suppose that $\boldsymbol{\eta_{k}}(\Sigma)\approx\Sigma$. We consider
the 3-fold cyclic branched cover of $S^{4}$ over $K_{2}$: see Figure~\ref{fig:brunnian-bing}~(b)
(note that this branched cover is $S^{4}$ again). Let $\rho$ be
the deck transformation corresponding to the meridian of $K_{2}$,
let $\widetilde{K_{2}}$ be the lift of $K_{2}$, and let $\widetilde{\eta}$
and $\widetilde{K_{1}}$ be a lift of $\eta$ and $K_{1}$, respectively,
as in the figure.

Let $\boldsymbol{\widetilde\eta_{k}}$ be the composition of the three diffeomorphisms given by pushing forward $\boldsymbol{\delta_{k}}$ along the lifts $\widetilde{\eta}$, $\rho\widetilde{\eta}$, and $\rho^{2}\widetilde{\eta}$ of $\eta$. Then, $\boldsymbol{\widetilde\eta_{k}}$ is a lift of $\boldsymbol{\eta_{k}}$.
Let $\widetilde{\Sigma}$ be the lift
of $\Sigma$ that splits $\widetilde{K_{1}}$ from $\rho\widetilde{K_{1}}\sqcup\rho^{2}\widetilde{K_{1}}$.
If $\boldsymbol{\eta_{k}}(\Sigma)\approx\Sigma$ rel.\ $K$, then in the 3-fold
branched cyclic cover, there exists some $i\in \{0,1,2\}$ such that $\boldsymbol{\widetilde{\eta}_{k}}(\widetilde{\Sigma})\approx\rho^{i}\widetilde{\Sigma}$
rel.\ $\bigsqcup_{i=0}^{2}\rho^{i}\widetilde{K_{1}}$. Among the
$\rho^{i}\widetilde{\Sigma}$'s for $i=0,1,2$, $\widetilde{\Sigma}$
is the only one that splits $\widetilde{K_{1}}$ from $\rho\widetilde{K_{1}}\sqcup\rho^{2}\widetilde{K_{1}}$,
and so we must have $\boldsymbol{\widetilde{\eta}_{k}}(\widetilde{\Sigma})\approx\widetilde{\Sigma}$
rel.\ $\bigsqcup_{i=0}^{2}\rho^{i}\widetilde{K_{1}}$. However, we claim that $\boldsymbol{\widetilde{\eta}_{k}}(\widetilde{\Sigma})$
is not even isotopic to $\widetilde{\Sigma}$ rel.\ $\widetilde{K_{1}}\sqcup\rho\widetilde{K_{1}}$. Just as in the
proof of Theorem~\ref{thm:2-cpt Brunnian}, if we ignore $\rho^{2}\widetilde{K_1}$, then $\rho\widetilde{\eta}$
and $\rho^{2}\widetilde{\eta}$ can be isotoped rel.\ $\widetilde{K_{1}}\sqcup\rho\widetilde{K_{1}}$
to not intersect $\widetilde{\Sigma}$. Hence if $\boldsymbol{\beta_{k}}$ is
the diffeomorphism of $S^{4}$ rel.\ $\widetilde{K_{1}}\sqcup\rho\widetilde{K_{1}}$
given by pushing forward $\boldsymbol{\delta_{k}}$ along $\widetilde{\eta}$,
we have 
\[
\boldsymbol{\widetilde{\eta}_{k}}\widetilde{\Sigma}\approx\boldsymbol{\beta_{k}}\widetilde{\Sigma}\not\approx\widetilde{\Sigma}\ {\rm rel.\ }\widetilde{K_{1}}\sqcup\rho\widetilde{K_{1}}
\]
by Theorem~\ref{thm:another-proof} and so $\boldsymbol{\eta_{k}}(\Sigma)\not\approx\Sigma$ rel.\ $K$. The same
argument shows that the pairs $\boldsymbol{\eta_{k}}(B_{1})\sqcup\boldsymbol{\eta_{k}}(B_{2})$
and $\boldsymbol{\eta_{\ell}}(B_{1})\sqcup\boldsymbol{\eta_{\ell}}(B_{2})$ are also nonisotopic
rel.\ $K$ for $k\neq\ell$.

\begin{figure}[h]
\begin{centering}
\includegraphics{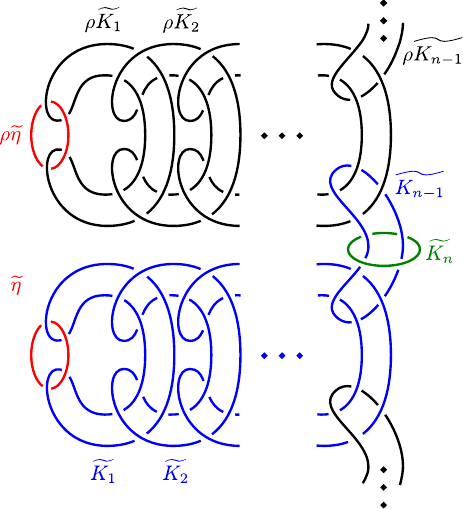}
\par\end{centering}
\caption{\label{fig:brunnian-cyclic}An $m$-fold ($m\ge2$) cyclic branched cover of Figure~\ref{fig:brunnian-bing}~(c) over $K_{n}$}
\end{figure}

Let us carry out the induction step; let $n\ge 3$. Let us consider an $m$-fold ($m\ge2$) cyclic branched
cover of $S^{4}$ along $K_{n}$ (which again is $S^{4}$) as in Figure~\ref{fig:brunnian-cyclic}. Let $\rho$ be the deck transformation
that corresponds to the meridian of $K_{n}$. Then we see $m$ lifts
$\widetilde{\eta},\rho\widetilde{\eta},...,\rho^{m-1}\widetilde{\eta}$
of $\eta$.
Let $\boldsymbol{\widetilde {\eta_k} }$ be the diffeomorphism given by the composition of the pushforwards of $\boldsymbol{\delta_{k}}$ along $\widetilde{\eta},\rho\widetilde{\eta},...,\rho^{m-1}\widetilde{\eta}$.
If $\boldsymbol{\eta_{k}}(B)\approx B$ rel.\ $K$, then in particular
we have $\boldsymbol{\eta_{k}}(B\setminus B_{n})\approx B\setminus B_{n}$ rel.\ $K$;
therefore in the branched cyclic cover, if
$\widetilde{B_{1}},\ldots,\widetilde{B_{n-1}}$ are the lifts of $B_{1},\ldots,B_{n-1}$ for which $\partial\widetilde{B}_{i}$ is the lift $\widetilde{K}_{i}$
of $K_{i}$ drawn in Figure~\ref{fig:brunnian-cyclic}, then 
\begin{equation}
\boldsymbol{\widetilde{\eta}_{k}}(\widetilde{B}_{1})\sqcup\ldots\sqcup\boldsymbol{\widetilde{\eta}_{k}}(\widetilde{B}_{n-1})\approx\widetilde{B}_{1}\sqcup\ldots\sqcup\widetilde{B}_{n-1}\ {\rm rel.}\ \widetilde{K}_{1}\sqcup\ldots\sqcup\widetilde{K}_{n-1}.\label{eq:reduct-n-1-brunnian}
\end{equation}

On the other hand, if we ignore the lift of $K_n$ and the lifts of $K_{1},\cdots, K_{n-1}$ other than $\widetilde{K_1}, \cdots \widetilde{K_{n-1}}$, then we see that the configuration of $\widetilde{\eta}$
and $\widetilde{K_1}\sqcup  \cdots \sqcup \widetilde{K_{n-1}}$ is the same as the configuration of the $\eta$ and $K_1\sqcup  \cdots \sqcup K_{n-1}$ that we used
to construct the $(n-1)$-component Brunnian links of $3$-balls.
By the induction hypothesis, Equation~(\ref{eq:reduct-n-1-brunnian}) does
not hold. Therefore, $\boldsymbol{\eta_{k}}(B)\not\approx B$ rel.\ $K$. The same argument
shows that $\boldsymbol{\eta_{k}}(B)\not\approx\boldsymbol{\eta_{\ell}}(B)$ rel.\ $K$ for
$k\neq\ell$.
\end{proof}
\begin{rem}
In the proof of Theorem~\ref{thm:n-cpt-Brunnian} for
$n=2$, it is possible to show the following by carefully keeping
track of orientations: consider the $m$-fold cyclic branched cover
of $S^{4}$ over $K_{2}$ for any $m\ge2$, let $\widetilde{K_{1}}$
be a lift of $K_{1}$, and let $\widetilde{B_{1}}$ and $\widetilde{B_{1}^{\eta}}$
be the lifts of $B_{1}$ and $\boldsymbol{\eta_{k}}(B_{1})$, respectively, such
that their boundary is $\widetilde{K_{1}}$. Then, $\widetilde{B_{1}}$
and $\widetilde{B_{1}^{\eta}}$ are not isotopic rel.\ $\partial$.
This computation is similar to that of our proof of Theorem~\ref{thm:another-proof}:
in $S^{4}\setminus N(\widetilde{K_{1}})\cong S^{1}\times B^3$,
we have $\widetilde{B_{1}^{\eta}}=\boldsymbol{\delta_{k}'}^{-1} \circ \boldsymbol{\delta_{k}}(\widetilde{B_{1}})$.
\end{rem}

\begin{figure}[h]
\begin{centering}
\includegraphics{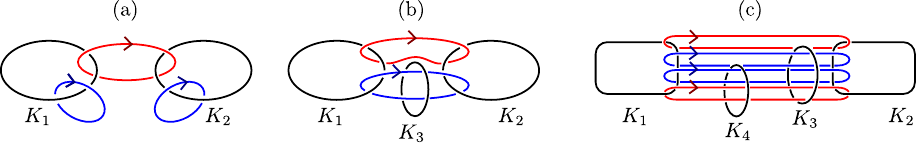}
\par\end{centering}
\caption{\label{fig:brunnian-red-blue-general}Pushing forward $\boldsymbol{\delta_{k}}$ along
the red circles and $\boldsymbol{\delta_{k}}^{-1}$ along the blue
circles give rise to Brunnian links of $3$-balls in
$S^{4}$.}
\end{figure}

\begin{rem}\label{rem:n2simple}
Note that we could have constructed $n$-component Brunnian 3-ball
links using the $2$-component links of Theorem~\ref{thm:2-cpt Brunnian}
directly as our base case, and generalizing the construction drawn
in Figure~\ref{fig:brunnian-red-blue-general}~(b)~and~(c). We
found the construction of Theorem~\ref{thm:n-cpt-Brunnian}, motivated
by Bing doubling, easier for exposition, and leave the alternative
proof to the interested reader. (To show that the links of $3$-balls are nonisotopic, induct on $n$ and consider the double branched cover of $S^4$ along $K_n$.)
\end{rem}

\bibliographystyle{amsalpha}
\bibliography{bib}

\end{document}